\documentclass[12pt,reqno]{amsart}
\usepackage{amsmath,amssymb,amsthm}
\usepackage{mathrsfs}
\usepackage{xcolor} 
\usepackage[utf8]{inputenc}
\usepackage[T1]{fontenc}
\usepackage{appendix}
\usepackage{enumitem}
\usepackage[protrusion=true,expansion=false]{microtype}
\usepackage{tikz-cd}
\usepackage[margin=1in]{geometry}
\usepackage{mathtools}
 \usepackage[colorlinks=true, linkcolor=blue, citecolor=blue]{hyperref}    
\usepackage{orcidlink}
  \newcommand{\orcid}[1]{\,{\large\orcidlink{#1}}}
\usepackage[capitalize,nameinlink]{cleveref}
\newtheorem{theorem}{Theorem}[section]
\newtheorem*{acknow}{\textup{Acknowledgement}}

\newtheorem{lemma}[theorem]{Lemma}

\theoremstyle{definition}

\newtheorem{remark}[theorem]{Remark}

\newtheorem{corollary}[theorem]{Corollary}

\newcommand\Q{\mathbb{Q}}
\newcommand\R{\mathbb{R}}
\newcommand\Z{\mathbb{Z}}
\newcommand\C{\mathbb{C}}

\newcommand{\HP}{\mathbb{HP}}
\newcommand{\AffiliationsBlock}{
  \vspace{0.8em}
  \begingroup
  \centering
  \footnotesize\itshape
  $^{a}$ Department of Mathematics, Indian Institute of Technology Madras, \\
  Chennai-600036, Tamil Nadu, India.\par
  \endgroup
  \vspace{-1.3em}
}\makeatletter
\patchcmd{\@maketitle}{
  \ifx\@empty\@dedicatory
  \else
}{
  \AffiliationsBlock
  \vspace{1.5em}
  \ifx\@empty\@dedicatory
  \else
}{}{\message{Patch failed!}}
\makeatother

\numberwithin{equation}{section}

\usepackage{orcidlink}

\begin{document}

\title[Complex structures on bundles over $M_{\ell,m}$]{Complex and stable complex
structures on real vector bundles over connected sums of quaternionic projective planes}
\author[S. Mandal]{{Souvik Mandal$^{\,a,*}$}
 \orcid{0009-0004-0429-3063}
}
\thanks{\hspace{-1.1em}*
  \makebox[8.5em][l]{\textit{E-mail addresses:}}
  \begin{minipage}[t]{0.75\textwidth}
    \texttt{ma22d014@smail.iitm.ac.in}, \texttt{ssouvik.xyz@gmail.com}.
  \end{minipage}}

\begin{abstract}
Let $M_{\ell,m}=\ell\,\mathbb{HP}^{2}\# m\,\overline{\mathbb{HP}^{2}}$ be the connected sum of
$\ell$ copies of $\mathbb{HP}^{2}$ with $m$ copies of $\mathbb{HP}^{2}$ endowed with the opposite orientation. We characterise, in terms of Pontryagin classes, the real vector bundles over $M_{\ell,m}$ admitting a stable complex structure, and deduce that $M_{\ell,m}$ is stably almost complex if and only if $\ell-m$ is even. We then determine when an oriented real vector bundle of even rank over $M_{\ell,m}$ admits a complex structure. Consequently, $M_{\ell,m}$ admits an almost complex structure if and only if $\ell=2m+1$ and $m$ is odd. This corrects an assertion made by Sato and Suzuki in 1974, according to which
$M_{\ell,m}$ is never almost complex. We also identify the unjustified step in
their argument.
\end{abstract}
\maketitle
\vspace{-0.5em}
\begin{center}
\begin{minipage}{0.845\textwidth}
    \footnotesize
    
    \begin{list}{}{
        \leftmargin=5.5em 
        \labelwidth=5.5em
        \labelsep=0pt \parsep=0pt \topsep=0pt \itemsep=0pt
    }
        \item[\textit{Keywords:}\hfill] Complex structure, Stable complex structure, Almost complex structure, Stable almost complex structure, Quaternionic projective plane, Connected sum.
    \end{list}

    \vspace{4pt}

    \begin{list}{}{
        \leftmargin=18.5em 
        \labelwidth=18.5em
        \labelsep=0pt \parsep=0pt \topsep=0pt \itemsep=0pt
    }
        \item[2020\hspace{1mm}\textit{Mathematics Subject Classification:}\hfill] {Primary 32Q60, 57R22;\\ Secondary 19L64, 55R50, 55S35, 57R15, 57R20.}
    \end{list}
\end{minipage}
\end{center}
\vspace{1em}

\section{Introduction}\label{sec:intro}

The existence of complex and stable complex structures on real vector bundles is
a fundamental problem in topology, investigated extensively by Massey~\cite{Massey},
Thomas~\cite{Thomas}, and Albanese and Milivojevi\'c~\cite{albanesemassey}, among
others, in terms of obstruction theory, characteristic classes and secondary
cohomology operations.

For the quaternionic projective spaces $\HP^{n}$, the non-existence of almost
complex structures was established in the decade following $1949$. The case $n=1$,
in which $\HP^{1}\cong\mathbb{S}^{4}$, was settled by Ehresmann~\cite{Ehresmann}
and by Hopf~\cite{Hopf}. For $n\geq 4$, Hirzebruch~\cite{Hirzebruch1953} ruled
out the existence of such structures using characteristic classes. The remaining
cases, $n=2$ and $n=3$, were subsequently resolved by Milnor, a result
announced during Hirzebruch's address at the 1958 International Congress of
Mathematicians~\cite{Hirzebruch1958}. Milnor's proof has remained unpublished,
and a self-contained proof for all $n\geq 2$ was later provided by
Massey~\cite{Massey1962}, who computed the ring $\widetilde{K}(\HP^{n})$ and the
Chern character of the tangent bundle of $\HP^{n}$. Gauduchon, Moroianu and
Semmelmann~\cite[Theorem~1.1]{GMS} have since shown that $\HP^{n}$ is not even
stably almost complex for $n\geq 2$; the case $n=1$ is exceptional, the sphere
$\mathbb{S}^{4}$ being stably parallelisable and hence stably almost complex.

The existence of almost complex structures on connected sums has received
considerable attention, in particular for connected sums of complex projective
spaces, studied by Goertsches and Konstantis~\cite{acsoscocps} and by
Yang~\cite{yang2019almost}; further results in this direction are due to
Albanese and Milivojevi\'c~\cite{albanese2019connected}. For connected sums of quaternionic projective spaces the existence problem has been
regarded as settled since 1974, when Sato and Suzuki \cite[Theorem~B]{SatoSuzuki}
asserted that no connected sum $\ell\,\HP^{n}\mathbin{\#}m\,\overline{\HP^{n}}$
with $n\leq 10$ admits an almost complex structure, except possibly when $n=3$
and $\ell=3m+1$. However, their assertion fails for $n=2$. In fact
$\ell\,\HP^{2}\#\,m\,\overline{\HP^{2}}$ admits an almost complex structure
compatible with its orientation precisely when $\ell=2m+1$ and $m$ is
odd (Corollary~\ref{thm:acsclassification}). Remark~\ref{rem:SatoSuzuki} identifies
the unjustified step in their argument, and verifies that the criterion of
Heaps~\cite[Theorem~1]{Heaps} to which Sato and
Suzuki appeal for the case $n=2$ returns for $M_{\ell,m}$ precisely the two
conditions of Corollary~\ref{thm:acsclassification}.

A \emph{complex structure} on an oriented real vector bundle $\xi$ of rank $2n$ over a topological space $X$ is given by a bundle endomorphism $J$ of $\xi$ with $J^{2}=-\mathrm{id}$ inducing the orientation of $\xi$. Equivalently, $\xi$ admits a complex structure if there exists a complex vector bundle of rank $n$ over $X$ whose underlying real vector bundle is isomorphic to $\xi$ as an oriented bundle. More generally, a \emph{stable complex structure} on a real vector bundle $\xi$ over $X$ is defined by a complex vector bundle over $X$ whose underlying real vector bundle is stably isomorphic to $\xi$. For $X$ a finite connected CW complex, an oriented real vector bundle $\xi$ of rank $2n$ over $X$ admits a complex structure precisely when the homotopy class of its classifying map lies in the image of the map $[X,BU(n)]\to[X,BSO(2n)]$ induced by the canonical inclusion of the unitary group $U(n)$ in the special orthogonal group $SO(2n)$, where $BG$ denotes the classifying space of a topological group $G$. Similarly, writing $\widetilde{K}(X)$ and $\widetilde{KO}(X)$ for the reduced complex and real $K$-groups, $\xi$ admits a stable complex structure precisely when its stable class $\widetilde{\xi}\in\widetilde{KO}(X)$ lies in the image of the realification map $\widetilde{r}:\widetilde{K}(X)\to \widetilde{KO}(X)$, induced by passing from a complex vector bundle to its underlying real vector bundle. An oriented smooth manifold $M$ is said to admit an \emph{almost complex structure}
if its tangent bundle $TM$ admits a complex structure, and a \emph{stable almost
complex structure} if $TM$ admits a stable complex structure.

In this article we study the existence of complex and stable complex structures on bundles over the family of closed oriented manifolds
\[
M_{\ell,m}=\ell\,\HP^{2}\#\,m\,\overline{\HP^{2}},
\qquad \ell,m\ge 0,\ \ell+m\ge 1,
\]
where $\overline{\HP^{2}}$ denotes $\HP^{2}$ equipped with the opposite orientation, and in each case we deduce the corresponding statement for the manifolds themselves.

We first fix the notation in which our results are stated; it is established in
Lemma~\ref{lem:cohomology}. For a closed oriented manifold $N$ we write $[N]$ for its fundamental class. Put $k=\ell+m$. The cohomology
$H^{*}(M_{\ell,m};\Z)$ is torsion-free and concentrated in degrees $0$, $4$
and $8$; there is a basis $\{x_{1},\dots,x_{k}\}$ of
$H^{4}(M_{\ell,m};\Z)$ and a generator $u$ of $H^{8}(M_{\ell,m};\Z)$ with
$\langle u,[M_{\ell,m}]\rangle=1$ such that
\[
x_{i}x_{j}=0\ (i\neq j),
\qquad
x_{i}^{2}=\epsilon_{i}u,
\qquad
\epsilon_{i}=
\begin{cases}
+1, & 1\leq i\leq \ell,\\
-1, & \ell<i\leq k.
\end{cases}
\]
In particular $H^{2}(M_{\ell,m};\Z)=H^{6}(M_{\ell,m};\Z)=0$, and likewise with
$\Z_{2}$ coefficients.

The basis is normalised, once and for all, by requiring in addition that
\[
p_{1}(M_{\ell,m})=2\,(x_{1}+x_{2}+\cdots+x_{k});
\]
such a basis exists (Theorem~\ref{prop:p1ConnectedSum}), and the reason for imposing
this normalisation is explained after Theorem~\ref{maintheorem}.

Since $H^{2}(M_{\ell,m};\Z_{2})=0$, every real vector bundle $\xi$ over $M_{\ell,m}$
has $w_{2}(\xi)=0$, so that the mod~$2$ reduction of $p_{1}(\xi)$, which equals
$w_{2}(\xi)^{2}$, vanishes, and there are uniquely determined integers
$a_{1},\dots,a_{k}$ and $b$ with
\[
p_{1}(\xi)=2\sum_{i=1}^{k}a_{i}x_{i},
\qquad
p_{2}(\xi)=b\,u .
\]
For an oriented real vector bundle $\xi$ we write $e(\xi)$ for its Euler class; if
$\xi$ has rank $8$ we write in addition $e(\xi)=e\,u$ with $e\in\Z$.

Our first result decides the stable problem in these terms; it is proved in
Section~\ref{sec:stable} by a direct computation of $\widetilde{K}(M_{\ell,m})$, $\widetilde{KO}(M_{\ell,m})$ and of the realification map between them.

\begin{theorem}\label{maintheorem}
A real vector bundle $\xi$ over $M_{\ell,m}$ admits a stable complex
structure if and only if
\begin{equation}\label{maincong}
b\;\equiv\;\sum_{i=1}^{k}\epsilon_{i}a_{i}(2a_{i}-1)\pmod{4}.
\end{equation}
\end{theorem}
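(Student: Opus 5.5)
Since the cohomology of $M_{\ell,m}$ is torsion-free and concentrated in degrees $0,4,8$, I will compute $\widetilde{K}(M_{\ell,m})$ and $\widetilde{KO}(M_{\ell,m})$ from the Atiyah--Hirzebruch spectral sequences. Both collapse for degree reasons: $\widetilde K$ is free of rank $k+1$, and $\widetilde{KO}$ has only the $E_2$-terms $H^4(M;\Z)$ and $H^8(M;\Z)$, since $KO^{-4}(pt)=KO^{-8}(pt)=\Z$ and all differentials into or out of these positions land in zero groups or in torsion-free targets where they must vanish. More concretely, I will use the cofibration $\bigvee_{k} S^4\to M_{\ell,m}\to S^8$, where the attaching map of the top cell is $\sum_i \epsilon_i\,\nu$-type Hopf maps $S^7\to S^4$ into the wedge. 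This gives short exact sequences $0\to \widetilde K(S^8)\to \widetilde K(M)\to \bigoplus_k\widetilde K(S^4)\to 0$, and likewise for $KO$. Each $\HP^2$ summand contributes the quaternionic line bundle $H_i$ on the $i$-th summand, pulled back along the collapse map $M\to\HP^2$ (or $\overline{\HP^2}$). Then $\widetilde{KO}(M)$ is free on the classes $y_i=r(H_i)-4$, viewed as symplectic bundles, together with the generator $z$ pulled back from $\widetilde{KO}(S^8)$. Similarly $\widetilde K(M)$ is free on $h_i=H_i-2$ and the pullback $w$ of the Bott generator.

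Next I will compute characteristic classes of these generators. I have $p_1(y_i)=\pm 2x_i$ (the sign is fixed by the normalisation of the $x_i$), and $p_2(y_i)$ is a multiple of $u$ that is determined by $x_i^2=\epsilon_i u$. For the sphere generator, $p_1(z)=0$ and $p_2(z)=\pm 6u$, because the generator of $\widetilde{KO}(S^8)$ has $p_2=6$ times a generator. For a general $\xi=\sum a_i' y_i + c z$ I will use the total Pontryagin class, which is multiplicative up to $2$-torsion; here there is no torsion, so it is exactly multiplicative. This expresses $a_i$ and $b$ polynomially in $a_i'$ and $c$, giving $b=\sum\epsilon_i(\text{quadratic in }a_i)+6c$. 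On the complex side, the relation $r(c(V))=V+\bar V$ and the fact that $H_i$ is quaternionic give $r(h_i)=y_i$ up to sign. Also $r(w)$ equals $z$ or $2z$, according to whether realification $\widetilde K(S^8)\to\widetilde{KO}(S^8)$ is onto; it is, since $r:\pi_8 BU\to\pi_8 BO$ is surjective in degree $8\equiv 0$. It remains to handle products: $h_i^2$ and $h_ih_j$ lie in the image of $\widetilde K(S^8)$, and $r(h_i^2)$ is computed via Chern classes. The image of $r$ is then the subgroup generated by $r(h_i)$, $r(h_i^2)$, $r(w)$ and similar classes, and it is cut out by a congruence mod $4$ on $(a,b)$.

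The main obstacle is getting the precise image of realification, in particular the behaviour of $r$ on the $S^8$-part together with the cross terms. The plan is to avoid this by working with Chern classes directly. For a complex bundle $V$ with $c_1=c_3=0$ we have $p_1(rV)=-2c_2$ and $p_2(rV)=c_2^2+2c_4$, but $c_4$ is constrained by integrality of the Chern character, $\mathrm{ch}_4\in H^8(M;\Z)$. Writing $c_2=-\sum a_i x_i$, we get $\mathrm{ch}_4=(c_2^2-2c_4)/12 \cdot$(sign), and the condition $12\mid c_2^2-2c_4$ with $c_2^2=\sum\epsilon_i a_i^2 u$ constrains $b=\sum\epsilon_i a_i^2+2c_4$. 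Combining this with the $\widehat A$/integrality condition for real bundles (the $KO$-integrality $p_2$ relation from the $KO$ computation) should yield exactly $b\equiv\sum\epsilon_i a_i(2a_i-1)\pmod 4$. I will verify it on the generators of both groups, using the normalisation $p_1(M)=2\sum x_i$ to fix the linear term $-\sum\epsilon_ia_i$.
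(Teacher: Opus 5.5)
Your architecture is the same as the paper's: free bases of $\widetilde K(M_{\ell,m})$ and $\widetilde{KO}(M_{\ell,m})$ from the cofibration, Pontryagin classes of the $KO$-basis via multiplicativity, then the image of realification computed on bases. The decisive step, however, is wrong.

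Realification $\widetilde K(\mathbb{S}^{8})\to\widetilde{KO}(\mathbb{S}^{8})$ is \emph{not} onto. In degrees $\equiv 0\pmod 8$ it is complexification that is an isomorphism, and $r$ is multiplication by $\pm 2$. Realification is onto only in degrees $\equiv 4\pmod 8$, which is why the classes $y_i=r(h_i)$ can serve as the $\mathbb{S}^{4}$-generators. You can see this directly. The generator $w$ has $c_1=c_2=c_3=0$ and $\operatorname{ch}_4(w)=-c_4(w)/6=\pm u$. Hence $p_2(r(w))=2c_4(w)=\mp 12u$, while $p_2(z)=\pm 6u$, so $r(w)=\pm 2z$.

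With your value $r(w)=\pm z$, the classes $r(h_i)=y_i$ and $r(w)$ would span all of $\widetilde{KO}(M_{\ell,m})$, so every real bundle would be stably complex. That contradicts the statement ($\HP^{2}$ is not stably almost complex) and your own claim that the image is cut out by a congruence. The factor $2$ is the whole content of the theorem. $\widetilde\xi=\sum_i a_iy_i+cz$ lies in the image iff $c$ is even. Since $b-\sum_i\epsilon_ia_i(2a_i-1)=6c$, this means divisibility by $12$, equivalently by $4$. The products $h_i^2$, $h_ih_j$ play no role: $r$ is additive, so its image is spanned by $r(h_1),\dots,r(h_k),r(w)$.

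Your fallback via Chern classes has a second error: $\operatorname{ch}_4$ is \emph{not} integral on $M_{\ell,m}$. Because the top cell is attached by Hopf maps, $\operatorname{ch}(h_i)=x_i+\tfrac{1}{12}\epsilon_iu$, where $c_2(h_i)=-x_i$ and $c_4(h_i)=0$. So your constraint $12\mid c_2^2-2c_4$ already fails for the complex bundle $q_i^{*}\gamma$, for which $c_2^2-2c_4=\epsilon_iu$.

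The correct constraint is $c_2^2-2c_4=\bigl(\sum_i\epsilon_ia_i+12n'\bigr)u$ for some $n'\in\Z$. The same condition follows from $\hat A$-integrality $\langle\operatorname{ch}(V)\hat A(M_{\ell,m}),[M_{\ell,m}]\rangle\in\Z$ for virtual $V$ of rank $0$ on the spin manifold $M_{\ell,m}$, where the linear term comes from $p_1(M_{\ell,m})/24$. It gives $b=\sum_i\epsilon_ia_i(2a_i-1)-12n'$.

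Even so, integrality conditions only bound the image of $r$ from above. For the ``if'' direction you still need two further facts. First, every admissible $(a_1,\dots,a_k;c_4)$ must be realised by a complex class. Second, $(a_1,\dots,a_k;b)$ must determine $\widetilde\xi$. Both come from the explicit bases, which is how the paper argues.
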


The normalisation of the basis $\{x_{1},\dots,x_{k}\}$ is not redundant, since
replacing a single $x_{i}$ by $-x_{i}$ preserves the relations $x_{i}x_{j}=0$ $(i\neq j)$ and $x_{i}^{2}=\epsilon_{i}u$ but
replaces $a_{i}$ by $-a_{i}$, and thereby changes the right-hand side
of the congruence~\eqref{maincong} by $2\epsilon_{i}a_{i}$, which is not divisible by $4$ when
$a_{i}$ is odd. The congruence \eqref{maincong} is nevertheless independent of the
choice of normalised basis,being equivalent by relations~\eqref{eq:ring}
to
\begin{equation*}
\bigl\langle\,2\,p_{1}(\xi)^{2}-4\,p_{2}(\xi)-p_{1}(\xi)\,p_{1}(M_{\ell,m})\,,\
[M_{\ell,m}]\bigr\rangle\equiv 0 \pmod{16},
\end{equation*}
which involves only $\xi$ and $M_{\ell,m}$. 

For the tangent bundle one has $a_{i}=1$ for all $i$ and $b=7(\ell-m)$, so
the congruence~\eqref{maincong} reduces to a congruence in $\ell$ and $m$ alone.

\begin{corollary}\label{cor:stablyAC}
The manifold $M_{\ell,m}$ admits a stable almost complex structure if and
only if $\ell-m$ is even.
\end{corollary}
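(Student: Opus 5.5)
The plan is to apply Theorem~\ref{maintheorem} to $\xi=TM_{\ell,m}$. This needs the values of $a_i$ and $b$ for the tangent bundle, and then a reduction of the congruence~\eqref{maincong} modulo $4$.

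\emph{Step 1: $a_i=1$.} By the normalisation $p_{1}(M_{\ell,m})=2(x_{1}+\cdots+x_{k})$ from Theorem~\ref{prop:p1ConnectedSum}, we have $a_i=1$ for every $i$.

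\emph{Step 2: $b=7(\ell-m)$.} I would use the Hirzebruch signature theorem in dimension $8$:
\[
\sigma(M_{\ell,m})=\tfrac{1}{45}\bigl\langle 7p_{2}-p_{1}^{2},[M_{\ell,m}]\bigr\rangle .
\]
From the intersection form, $\sigma(M_{\ell,m})=\ell-m$. Using the ring relations,
\[
p_{1}^{2}=4\sum_{i=1}^{k}x_i^{2}=4(\ell-m)\,u .
\]
Substituting gives $45(\ell-m)=7b-4(\ell-m)$, hence $b=7(\ell-m)$, consistent with the paper's claim.

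\emph{Step 3: the congruence.} With $a_i=1$, each summand on the right-hand side of~\eqref{maincong} is $\epsilon_{i}\cdot 1\cdot(2-1)=\epsilon_i$. The right-hand side is therefore $\sum_i\epsilon_i=\ell-m$. The condition becomes
\[
7(\ell-m)\equiv \ell-m \pmod 4,
\]
that is, $6(\ell-m)\equiv 0\pmod 4$, which holds exactly when $\ell-m$ is even.

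By Theorem~\ref{maintheorem}, $TM_{\ell,m}$ admits a stable complex structure if and only if this congruence holds, which is the claim. There is no real obstacle. The only point needing care is the signature computation in Step 2 and the sign conventions $x_i^2=\epsilon_i u$ with $\langle u,[M_{\ell,m}]\rangle=1$, which ensure $\sigma=\ell-m$ and $p_1^2=4(\ell-m)u$.
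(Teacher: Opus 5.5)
Your proof is correct and follows the paper's argument: with $a_{i}=1$ and $b=7(\ell-m)$, the congruence~\eqref{maincong} reduces to $6(\ell-m)\equiv 0\pmod 4$, and Theorem~\ref{maintheorem} then gives the claim. The one difference is how you get $b$: you derive it from the Hirzebruch signature theorem, via $45(\ell-m)=7b-4(\ell-m)$, which is a valid independent check. The paper instead reads $p_{2}(M_{\ell,m})=7(\ell-m)u$ directly off Theorem~\ref{prop:p1ConnectedSum}, which already computes it from the stable isomorphism $TM_{\ell,m}\sim\bigoplus_{i}q_{i}^{*}(T\HP^{2})$.
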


By Theorem~\ref{thm:connectedsumtangent} the stable tangent bundle of $M\#N$
splits as $p_{M}^{*}(TM)\oplus p_{N}^{*}(TN)$, so that a connected sum of stably
almost complex manifolds is again stably almost complex.
Corollary~\ref{cor:stablyAC} does not follow from this, since $\HP^{2}$ is not
stably almost complex. It exhibits instead a family in which a stable almost
complex structure arises from the connected sum rather than from its summands,
the manifold $\HP^{2}\#\HP^{2}$ being stably almost complex although $\HP^{2}$
is not.

For a real vector bundle $\xi$ over $M_{\ell,m}$, the passage from stable complex
structures to complex structures is governed in ranks $4$ and $6$ by the half-spinor
bundles of $\xi$ (Lemma~\ref{lem:linesubbundle}), and in rank $8$ by the Chern classes of a stable complex structure
on $\xi$. Since $H^{2}(M_{\ell,m};\Z)$ and $H^{6}(M_{\ell,m};\Z)$ vanish, these Chern
classes are determined by the Pontryagin classes of $\xi$ (Lemma~\ref{lem:chern}), and
in particular independent of the stable complex structure chosen; the criteria below
therefore involve only the integers associated above to $\xi$, together with its
Euler class.
\begin{theorem}\label{thm:bundleacs}
Let $\xi$ be an oriented real vector bundle of rank $2n$ over $M_{\ell,m}$.
\begin{enumerate}[label=\textup{(\roman*)},leftmargin=2.6em]
\item If $2n=4$, then $\xi$ admits a complex structure if and only if
\begin{equation}\label{eq:rank4nec}
e(\xi)=-\sum_{i=1}^{k}a_{i}x_{i}.
\end{equation}
\item If $2n=6$, then $\xi$ admits a complex structure if and only if
\begin{equation}\label{eq:rank6}
b=\sum_{i=1}^{k}\epsilon_{i}a_{i}^{2}.
\end{equation}
\item If $2n=8$, then $\xi$ admits a complex structure if and only if congruence~\eqref{maincong} holds and
\begin{equation}\label{eq:euler}
b-\sum_{i=1}^{k}\epsilon_{i}a_{i}^{2}=2e.
\end{equation}
\item If $2n\geq 10$, then $\xi$ admits a complex structure if and only if congruence~\eqref{maincong} holds.
\end{enumerate}
\end{theorem}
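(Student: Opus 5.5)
The plan is to treat the four ranks separately. The common input is that $M_{\ell,m}$ has a CW structure with cells only in dimensions $0$, $4$ and $8$, and that $H^{2}=H^{6}=0$. The characteristic-class bookkeeping comes from Lemma~\ref{lem:chern}. If $\eta$ is a complex bundle with $r(\eta)$ stably isomorphic to $\xi$, then $c_{1}(\eta)=c_{3}(\eta)=0$. The formulas $p_{1}=c_{1}^{2}-2c_{2}$ and $p_{2}=c_{2}^{2}-2c_{1}c_{3}+2c_{4}$ then give
\[
c_{2}(\eta)=-\sum_{i}a_{i}x_{i},\qquad c_{2}(\eta)^{2}=\sum_{i}\epsilon_{i}a_{i}^{2}\,u,\qquad 2c_{4}(\eta)=\Bigl(b-\sum_{i}\epsilon_{i}a_{i}^{2}\Bigr)u .
\]
Throughout, Theorem~\ref{maintheorem} supplies the stable complex structure whenever \eqref{maincong} holds.

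\emph{Ranks $2n\ge 10$ and $2n=8$.} Necessity of \eqref{maincong} is clear from Theorem~\ref{maintheorem}. For sufficiency, the map $BU(n)\to BU$ is $(2n+1)$-connected. For $n\ge4$ this makes $[M_{\ell,m},BU(n)]\to\widetilde K(M_{\ell,m})$ bijective, since $\dim M_{\ell,m}=8$. So the stable complex structure is realised by a rank-$n$ complex bundle $\eta$, and $r(\eta)$ is stably isomorphic to $\xi$.

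If $2n\ge10$, then $BSO(2n)\to BSO$ is $(2n-1)$-connected with $2n-1\ge9$, so stably isomorphic oriented bundles of rank $2n$ are isomorphic. The orientation issue disappears because rank exceeds dimension: $\xi$ has a nowhere-zero section, so a reflection gives an orientation-reversing automorphism of $\xi$.

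If $2n=8$, the map $BSO(8)\to BSO$ is only $7$-connected. Two oriented rank-$8$ bundles over an $8$-complex with equal stable class differ by an element coming from the top cell. On the top cell the unstable part of $\pi_{8}(BSO(8))$ is detected by the Euler class, because $H^{*}$ is torsion-free and $\pi_{7}(SO(8))\to\pi_{7}(SO)$ is onto with kernel detected by $e$. I will use this to show that oriented rank-$8$ bundles over $M_{\ell,m}$ with the same stable class and the same Euler class are isomorphic. Since $\eta$ is unique, $e(r\eta)=c_{4}(\eta)$ is forced. Hence $\xi\cong r(\eta)$ if and only if $e=c_{4}(\eta)$, which is exactly \eqref{eq:euler}.

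\emph{Rank $6$.} I will use Lemma~\ref{lem:linesubbundle}. Via $\mathrm{Spin}(6)=SU(4)$, and since $w_{2}(\xi)=0$, $\xi$ lifts to an $SU(4)$-bundle $E$ with $\xi\cong\Lambda^{2}_{\R}E$. A complex structure on $\xi$ corresponds to a trivial line subbundle of $E$ (or of $\bar E$, according to the orientation). Because the cells of $M_{\ell,m}$ have dimension at most $8$ and $E$ has complex rank $4$, the only obstruction to a nowhere-zero section of $E$ is $c_{4}(E)\in H^{8}$. The argument then reduces to computing $c_{2}(E)$ and $c_{4}(E)$ from $p_{1}(\xi)$ and $p_{2}(\xi)$ by the splitting principle for $\Lambda^{2}$. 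I expect to find $c_{2}(E)=\pm\sum a_{i}x_{i}$ and $c_{4}(E)$ proportional to $b-\sum\epsilon_{i}a_{i}^{2}$, which gives \eqref{eq:rank6}. The proportionality constant has to be checked so that integrality works out: I expect it to be a unit, consistent with \eqref{eq:rank6} implying \eqref{maincong}.

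\emph{Rank $4$, the main obstacle.} Here $\mathrm{Spin}(4)=SU(2)\times SU(2)$, and $\xi$ lifts to half-spinor bundles $S^{\pm}$. Lemma~\ref{lem:linesubbundle} says $\xi$ has a complex structure with the given orientation exactly when $S^{+}$ has a line subbundle. Since $H^{2}=0$, that line bundle is trivial, so $S^{+}$ is trivial. We have $p_{1}(\xi)=-2(c_{2}(S^{+})+c_{2}(S^{-}))$ and $e(\xi)=c_{2}(S^{-})-c_{2}(S^{+})$, with signs to be fixed by a check on $\HP^{1}$. So $c_{2}(S^{+})=0$ is equivalent to \eqref{eq:rank4nec}, and this gives necessity.

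For sufficiency, $c_{2}(S^{+})=0$ makes $S^{+}$ trivial over the $4$-skeleton. There remains an obstruction from the top cell in $\pi_{8}(BSU(2))=\pi_{7}(S^{3})=\Z_{2}$, and this $\Z_{2}$ cannot be detected cohomologically. My plan is to avoid killing it. Instead, I will show that the rank-$4$ bundles obtained from the trivial $S^{+}$ by altering along the top cell all admit complex structures anyway. The reason is that the top-cell modification is a clutching class in $\pi_{7}(SO(4))$ lying in the image of $\pi_{7}(U(2))=\pi_{7}(S^{3})$, i.e.\ in the factor acting on $S^{-}$. Making this precise is the delicate point. Concretely, I must write $\xi$ as a bundle that is complex on $M_{\ell,m}\setminus D^{8}$, with clutching over $S^{7}$ in the $SU(2)$-factor that preserves the complex structure, so that the complex structure extends over the top cell.
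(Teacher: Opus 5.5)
\textbf{Gap: sufficiency in case (i).} This step is not a technicality. By Lemma~\ref{lem:linesubbundle} and $H^{2}(M_{\ell,m};\Z)=0$ (exactly as you use them for necessity), $\xi$ admits a complex structure if and only if $S_{+}$ is trivial. So sufficiency is \emph{equivalent} to the statement that every $SU(2)$-bundle over $M_{\ell,m}$ with $c_{2}=0$ is trivial, i.e.\ that the $\Z_{2}=\pi_{8}(BSU(2))$ on the top cell is killed. There is no way to ``avoid killing it''.

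Your proposed reason is also wrong. A top-cell alteration of $S_{+}$ is a clutching class in the $SU(2)$-factor of $\pi_{7}(SO(4))\cong\pi_{7}(\mathbb{S}^{3})\oplus\pi_{7}(\mathbb{S}^{3})$ that acts on $S_{+}$. The stabiliser $U(2)\cong(U(1)\times SU(2))/\{\pm1\}$ of a positive complex structure has its $SU(2)$ in the factor acting on $S_{-}$, which is why $\pi_{7}(U(2))$ lands there, as you say. Alterations in the $S_{-}$-factor are harmless, but they are not the ones at issue. Indeed, suppose $q^{*}$ carried the nonzero element of $\pi_{8}(BSU(2))$ to a nontrivial bundle $P$. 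Then the rank-$4$ bundle with $S_{+}=P$ and $S_{-}$ trivial would have all $a_{i}=0$ and $e(\xi)=0$. It would satisfy \eqref{eq:rank4nec} without admitting a complex structure, and (i) would be false.

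\textbf{The missing idea} is Lemma~\ref{lem:su2}. Consider the Puppe sequence $[\bigvee_{k}\mathbb{S}^{5},BSU(2)]\to[\mathbb{S}^{8},BSU(2)]\xrightarrow{q^{*}}[M_{\ell,m},BSU(2)]\to[\bigvee_{k}\mathbb{S}^{4},BSU(2)]$. Its first map is $\varphi^{*}\colon\bigoplus_{i}\pi_{4}(\mathbb{S}^{3})\to\pi_{7}(\mathbb{S}^{3})$. The image contains $\Sigma\eta\circ\nu=\nu'\circ\Sigma^{4}\eta$ (here $\eta$ is the Hopf map, not your bundle), which by Toda generates $\pi_{7}(\mathbb{S}^{3})\cong\Z_{2}$. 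Hence $q^{*}$ has trivial image. In obstruction-theoretic terms: the top-cell obstruction to trivialising $S_{+}$ has indeterminacy all of $\pi_{7}(\mathbb{S}^{3})$. Re-choosing the trivialisation on the $i$th $4$-cell by $\Sigma\eta$ changes the obstruction by $\Sigma\eta\circ\nu_{i}\neq0$.

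\textbf{The remaining cases are sound.} Ranks $\geq10$ and $6$ follow the paper. In rank $6$ the constant is not a unit, since $4c_{4}(S_{+})=(\sum_{i}\epsilon_{i}a_{i}^{2}-b)u$. Torsion-freeness of $H^{8}$ makes this harmless, and compatibility with \eqref{maincong} comes from the realisability constraint of Remark~\ref{rem:rank6realizability}, not from integrality.

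In rank $8$ you take a genuinely different route, replacing Thomas's criterion (Theorem~\ref{thm:destabilize}) by a direct argument, and it is valid. Two oriented rank-$8$ bundles with equal stable class agree on the $4$-skeleton and differ by some $\delta\in\pi_{7}(SO(8))$ on the top cell. Injectivity of $q^{*}$ on $\widetilde{KO}$ (Lemma~\ref{lem:KOtheory}) kills the stabilisation of $\delta$, and equal Euler classes kill the remaining $\Z$. This is self-contained and reproves exactly the case of Thomas's theorem that is needed. Minor point: $BSO(8)\to BSO$ is $8$-connected, not $7$-connected.
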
 
The theorem covers every even rank other than $2$. In rank $2$ every oriented real
vector bundle over $M_{\ell,m}$ is trivial, since $H^{2}(M_{\ell,m};\Z)=0$, and
therefore admits a complex structure.

Observe that, the second Pontryagin class of an oriented real vector bundle of
rank $4$ being the square of its Euler class, equation \eqref{eq:rank4nec} implies
$b=\sum_{i=1}^{k}\epsilon_{i}a_{i}^{2}$, which is equation~\eqref{eq:rank6}. As conditions on
arbitrary integer tuples, however, equation~\eqref{eq:rank6} does not
imply the congruence \eqref{maincong}, whereas a complex structure on a bundle is
in particular a stable complex structure, so that congruence~\eqref{maincong} holds for every
bundle admitting a complex structure. No contradiction arises, since not every tuple $(a_{1},\dots,a_{k};b)$ is realised by
a bundle of rank $6$. In Remark~\ref{rem:rank6realizability} we determine which tuples are so realised, and
deduce that for those arising from a bundle, both equations~\eqref{eq:rank4nec}
and \eqref{eq:rank6} do imply the congruence~\eqref{maincong}.

Applied to the tangent bundle, for which additionally $e=\chi(M_{\ell,m})=\ell+m+2$, Theorem~\ref{thm:bundleacs} determines in Section~\ref{sec:acs} exactly which of the manifolds $M_{\ell,m}$ admit an almost complex structure.

\begin{corollary}\label{thm:acsclassification}
The manifold $M_{\ell,m}$ admits an almost complex structure compatible with
its orientation if and only if $\ell=2m+1$ and $m$ is odd; equivalently, if
and only if $(\ell,m)=(4j+3,\,2j+1)$ for some integer $j\geq 0$.
\end{corollary}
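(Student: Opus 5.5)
We apply Theorem~\ref{thm:bundleacs}(iii) to $\xi=TM_{\ell,m}$, a bundle of rank $8$. First we record its invariants. By the normalisation $p_{1}(M_{\ell,m})=2\sum_i x_i$, we have $a_i=1$ for all $i$. As stated after Theorem~\ref{maintheorem}, $b=7(\ell-m)$. For the Euler class, $e=\chi(M_{\ell,m})$. The cohomology computed in Lemma~\ref{lem:cohomology} has Betti numbers $1,k,1$ in degrees $0,4,8$, so $e=k+2=\ell+m+2$.

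The manifold admits an almost complex structure exactly when both \eqref{maincong} and \eqref{eq:euler} hold. Since $\sum_i\epsilon_i=\ell-m$, the congruence \eqref{maincong} becomes $7(\ell-m)\equiv \ell-m\pmod 4$, that is, $6(\ell-m)\equiv0\pmod 4$. This is equivalent to $\ell-m$ being even, which recovers Corollary~\ref{cor:stablyAC}. Equation \eqref{eq:euler} becomes
\[
7(\ell-m)-(\ell-m)=2(\ell+m+2),\qquad\text{i.e.}\qquad 6(\ell-m)=2(\ell+m+2),
\]
which simplifies to $3\ell-3m=\ell+m+2$, hence $2\ell=4m+2$ and $\ell=2m+1$.

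We combine the two conditions. If $\ell=2m+1$, then $\ell-m=m+1$, which is even precisely when $m$ is odd. Writing $m=2j+1$ with $j\ge0$ gives $\ell=4j+3$. Conversely, every such pair satisfies both conditions, so the equivalence holds in both directions.

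The only step requiring care is $b=7(\ell-m)$. It uses the Hirzebruch signature theorem: $\sigma=\langle(7p_2-p_1^2)/45,[M]\rangle$. Here $\sigma=\ell-m$, and $p_1^2=4\sum_i x_i^2=4(\ell-m)u$. Hence $7b=45(\ell-m)+4(\ell-m)=49(\ell-m)$, giving $b=7(\ell-m)$. Since the paper already states this value, we quote it and only need to make sure the orientation convention for $e$ agrees with the orientation on $M_{\ell,m}$, so that "compatible with its orientation" is justified.
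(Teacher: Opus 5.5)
Your proof is correct and is essentially the paper's: you apply Theorem~\ref{thm:bundleacs}\,(iii) to $TM_{\ell,m}$ with tuple $(1,\dots,1;7(\ell-m))$ and $e=\ell+m+2$, reduce congruence~\eqref{maincong} to $\ell-m$ even and equation~\eqref{eq:euler} to $\ell=2m+1$, and then combine the two conditions. Your side check of $b=7(\ell-m)$ via the signature formula $L_{2}=(7p_{2}-p_{1}^{2})/45$ is also valid; the paper instead obtains this value directly in Theorem~\ref{prop:p1ConnectedSum}, from the stable splitting of $TM_{\ell,m}$ as $\bigoplus_{i}q_{i}^{*}T\HP^{2}$ together with $p(\HP^{2})=1+2\alpha+7\alpha^{2}$.
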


In Remarks~\ref{rem:Yang2012stable} and~\ref{rem:Yang2012acs} we recover
Corollaries~\ref{cor:stablyAC} and~\ref{thm:acsclassification} from the criteria
of Yang~\cite{Yang2012}.

\subsection*{Organisation of the paper} Section~\ref{sec:prelim} records the cohomology ring of $M_{\ell,m}$, computes
the Pontryagin classes of its tangent bundle,  together with the
characteristic classes of the tautological quaternionic line bundle over
$\HP^{2}$.
Section~\ref{sec:stable} determines $\widetilde{K}(M_{\ell,m})$,
$\widetilde{KO}(M_{\ell,m})$ and the realification map between them, from which
Theorem~\ref{maintheorem} and Corollary~\ref{cor:stablyAC} follow.
Section~\ref{sec:bundleacs} passes from stable complex structures to complex
structures and proves Theorem~\ref{thm:bundleacs}, using the exceptional
isomorphisms $Spin(4)\cong SU(2)\times SU(2)$ and
$Spin(6)\cong SU(4)$ in ranks $4$ and $6$ respectively, and a criterion of
Thomas~\cite{Thomas} in rank $8$, together with the connectivity of
the maps of classifying spaces induced by stabilisation of the unitary and special
orthogonal groups in higher ranks. Section~\ref{sec:acs} applies
Theorem~\ref{thm:bundleacs} to the tangent bundle and proves
Corollary~\ref{thm:acsclassification}; Remark~\ref{rem:SatoSuzuki} examines the
argument of \cite[Theorem~B]{SatoSuzuki}. 
\section{Cohomology and Pontryagin classes of \texorpdfstring{$M_{\ell,m}$}{M\_\{l,m\}}}\label{sec:prelim}
In this section we normalise a generator of $H^{4}(\HP^{2};\Z)$ by means of the first
Pontryagin class of $\HP^{2}$, compute the characteristic classes of the tautological
quaternionic line bundle over $\HP^{2}$, record the cohomology ring of $M_{\ell,m}$,
and compute the Pontryagin classes of its tangent bundle.

Fix integers $\ell,m\geq 0$ with $k:=\ell+m\geq 1$ and set $M_{\ell,m}=\ell\,\HP^{2}\,\#\,m\,\overline{\HP^{2}}$. Recall that $H^{*}(\HP^{2};\Z)\cong\Z[v]/(v^{3})$ with $|v|=4$; in particular the
cohomology of $\HP^{2}$ is torsion-free and concentrated in degrees $0,4,8$.
Throughout, $\HP^{2}$ carries its standard orientation, for which
$\langle v^{2},[\HP^{2}]\rangle=1$ for either generator $v$ of $H^{4}(\HP^{2};\Z)$.

\begin{lemma}\label{firstp1}
There is a unique generator $\alpha\in H^{4}(\HP^{2};\Z)$ such that $p_{1}(\HP^{2})=2\alpha$; for this generator, $p_{2}(\HP^{2})=7\alpha^{2}$.
\end{lemma}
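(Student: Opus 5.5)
We plan to use the standard description of the tangent bundle of $\HP^{2}$ via the tautological quaternionic line bundle $\gamma$. Let $\gamma\subset \HP^{2}\times\mathbb{H}^{3}$ be the tautological bundle and $\gamma^{\perp}$ its orthogonal complement, so that $\gamma\oplus\gamma^{\perp}\cong 3\,\underline{\mathbb{H}}$. There is an isomorphism of real bundles $T\HP^{2}\cong \mathrm{Hom}_{\mathbb{H}}(\gamma,\gamma^{\perp})$. Adding $\mathrm{Hom}_{\mathbb{H}}(\gamma,\gamma)$ gives
\[
T\HP^{2}\oplus \mathrm{Hom}_{\mathbb{H}}(\gamma,\gamma)\cong 3\,\mathrm{Hom}_{\mathbb{H}}(\gamma,\underline{\mathbb{H}}),
\]
which is the quaternionic analogue of the Euler sequence. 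The rank-$4$ bundle $\mathrm{Hom}_{\mathbb{H}}(\gamma,\gamma)$ splits as a trivial line (the real scalars) plus the rank-$3$ bundle $\mathrm{ad}$ associated to $\gamma$ via $Sp(1)\to SO(3)$. Also, $\mathrm{Hom}_{\mathbb{H}}(\gamma,\underline{\mathbb{H}})$ is isomorphic to $\gamma$ or its conjugate as a real oriented rank-$4$ bundle; in either case it has the same Pontryagin classes as $\gamma_{\R}$.

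The key computation is the characteristic classes of $\gamma$ viewed as a complex rank-$2$ bundle with structure group $SU(2)=Sp(1)$. We have $c_{1}(\gamma)=0$, and $c_{2}(\gamma)=\pm$ a generator, since $\gamma$ restricted to $\HP^{1}=\mathbb{S}^{4}$ generates $\pi_{4}(BSU(2))$. Set $y=c_{2}(\gamma)$. From the Chern roots $\pm t$ with $t^{2}=-y$, we get:
\begin{itemize}
\item the total Pontryagin class of the realification, $p(\gamma_{\R})=(1+t^{2})^{2}$ up to the usual sign conventions, so $p_{1}(\gamma_{\R})=-2y$ and $p_{2}(\gamma_{\R})=y^{2}$;
\item for the adjoint bundle $\mathrm{ad}\gamma\otimes\C$, Chern roots $2t,0,-2t$, so $p_{1}(\mathrm{ad})=-4y$ and $p_{2}(\mathrm{ad})=0$ in rank $3$.
\end{itemize}

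We then use multiplicativity of total Pontryagin classes. Since $H^{*}$ is torsion-free, the Whitney formula holds integrally modulo $2$-torsion, which does not occur here. This gives
\[
p(T\HP^{2})\,(1-4y)=(1-2y+y^{2})^{3}.
\]
Expanding the right side modulo $y^{3}$ gives $1-6y+15y^{2}$. Multiplying by $(1-4y)^{-1}=1+4y+16y^{2}$ yields
\[
p(T\HP^{2})=1-2y+7y^{2}.
\]
Setting $\alpha=-y$ gives $p_{1}=2\alpha$ and $p_{2}=7\alpha^{2}$.

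Uniqueness of $\alpha$ is immediate. The other generator $-\alpha$ would give $p_{1}=-2(-\alpha)$, and $2\alpha\neq-2\alpha$ in the torsion-free group $\Z$.

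The main obstacle is sign bookkeeping: the sign of $p_{1}(\gamma_{\R})$ relative to $c_{2}$, and whether $\mathrm{Hom}_{\mathbb{H}}(\gamma,\underline{\mathbb{H}})$ is $\gamma$ or its conjugate. Fortunately the lemma only asserts the existence of some generator, and $p_{2}$ is insensitive to the choice since it is a multiple of $\alpha^{2}=u$. As a check on $p_{2}=7\alpha^{2}$, we will confirm that the signature formula $\langle(7p_{2}-p_{1}^{2})/45,[\HP^{2}]\rangle=(49-4)/45=1$ agrees with $\sigma(\HP^{2})=1$. This fixes any residual ambiguity, since $p_{1}^{2}=4u$ regardless of sign.
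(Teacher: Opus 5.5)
Your proof is correct, but it takes a different route from the paper's. The paper quotes the formula $p(\HP^{n})=(1+w)^{2n+2}(1+4w)^{-1}$ for a suitable generator $w$ and expands it at $n=2$. You instead derive that formula for $n=2$ from the quaternionic Euler sequence $T\HP^{2}\oplus\operatorname{Hom}_{\mathbb H}(\gamma,\gamma)\cong 3\gamma_{\R}$: you compute $p(\gamma_{\R})=(1-y)^{2}$ and $p(\operatorname{ad}\gamma)=1-4y$ from the same Chern roots $\pm t$, then invert $1-4y$ in $\Z[y]/(y^{3})$. This is the computation the paper carries out in Lemma~\ref{lem:gammanormalisation}, except that the paper does it only in degree $4$, and only to locate $c_{2}(\gamma)$ relative to an $\alpha$ already supplied by the citation. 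Your version is self-contained and proves both lemmas at once, giving $\alpha=-c_{2}(\gamma)$ directly. It costs two things the paper's route avoids. First, you need the extra input that $c_{2}(\gamma)$ generates $H^{4}(\HP^{2};\Z)$; your appeal to $\gamma|_{\HP^{1}}$, the Hopf bundle, generating $\pi_{4}(BSU(2))$ is the right justification. Second, you must get the \emph{relative} sign of $p_{1}(\gamma_{\R})$ and $p_{1}(\operatorname{ad}\gamma)$ right, because flipping one of them gives $1+10y+55y^{2}$ (up to $y\mapsto -y$) instead of $1-2y+7y^{2}$. Your remark that the lemma only asserts some generator covers the overall sign but not this relative one. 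The relative sign is nevertheless handled correctly because you compute both classes from the same roots, and your signature check $(7\cdot 7-4)/45=1$ would have caught an error, since $(7\cdot 55-100)/45$ is not an integer. The paper's citation, by contrast, gives the formula for every $n$ with no bookkeeping, but then needs a separate argument to relate the cited generator $w$ to $\gamma$.
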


\begin{proof}
By \cite{quaternionic} there is a generator $w$ of $H^{4}(\HP^{n};\Z)$ for which the
total Pontryagin class satisfies $p(\HP^{n})=(1+w)^{2n+2}(1+4w)^{-1}$. Expanding
for $n=2$ and using $w^{3}=0$ yields $p(\HP^{2})=1+2w+7w^{2}$, so that
$p_{1}(\HP^{2})=2w$ and $p_{2}(\HP^{2})=7w^{2}$. The group $H^{4}(\HP^{2};\Z)\cong\Z$
has exactly two generators, $w$ and $-w$; the generator $\alpha=w$ is
therefore the desired one with $p_{1}(\HP^{2})=2\alpha$,
and $p_{2}(\HP^{2})=7\alpha^{2}$.
\end{proof}

Let $\gamma$ denote the tautological quaternionic line bundle over $\HP^{2}$
\cite[p.~243]{characteristic}, whose fibres we regard as right $\mathbb{H}$-modules;
right multiplication by $i\in\mathbb{H}$ endows it with the structure of a complex
vector bundle of rank $2$, so that $c_{1}(\gamma)=0$.

\begin{lemma}\label{lem:gammanormalisation}
The generator $\alpha$ of Lemma~\ref{firstp1} satisfies
\begin{equation}\label{eq:normalisation}
c_{2}(\gamma)=-\alpha .
\end{equation}
\end{lemma}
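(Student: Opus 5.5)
Since $H^{4}(\HP^{2};\Z)\cong\Z$ is generated by $\alpha$, write $c_{2}(\gamma)=\lambda\alpha$ for some integer $\lambda$; the task is to show $\lambda=-1$. I will do this in two steps: first show $\lambda=\pm1$, then fix the sign using the Pontryagin class computation of Lemma~\ref{firstp1}.

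\textbf{Step 1: $\lambda=\pm1$.} Restrict $\gamma$ to $\HP^{1}\cong\mathbb{S}^{4}$, where it is the tautological quaternionic line bundle; its complex structure makes it the instanton (Hopf) bundle, whose second Chern class generates $H^{4}(\mathbb{S}^{4};\Z)$. Since the inclusion $\HP^{1}\hookrightarrow\HP^{2}$ induces an isomorphism on $H^{4}$ (cellular structure with cells in dimensions $0,4,8$), $c_{2}(\gamma)$ is a generator. Alternatively, $c_{2}(\gamma)$ is the Euler class of the underlying oriented real rank-$4$ bundle, and it is standard that it generates $H^{4}(\HP^{n};\Z)$, e.g.\ via the Gysin sequence of the sphere bundle $\mathbb{S}^{4n+3}\to\HP^{n}$.

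\textbf{Step 2: fixing the sign.} This is the main obstacle. I would express $T\HP^{2}$ through $\gamma$. The tangent bundle satisfies $T\HP^{n}\oplus(\gamma\otimes_{\mathbb{H}}\bar\gamma)\cong(n+1)\,\gamma^{*}\otimes_{\mathbb{H}}\gamma$-type identities; the cleanest form is the stable relation from \cite{quaternionic}, which underlies $p(\HP^{n})=(1+w)^{2n+2}(1+4w)^{-1}$: the factor $(1+w)^{2n+2}$ comes from $n+1$ copies of the complex bundle $\gamma$ (or its conjugate, which has the same $c_{2}$), with $w$ the class such that $p(\gamma_{\R})=1+w$ (up to the convention). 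For a complex rank-$2$ bundle $E$ with $c_{1}=0$, we have $p_{1}(E_{\R})=c_{1}^{2}-2c_{2}=-2c_{2}(E)$. So I would verify directly that $p_{1}(\gamma_{\R})=-2c_{2}(\gamma)$, and that in the formula for $p(\HP^{2})$ the total class $(1+w)^{6}$ arises as $p(3\gamma_{\R})$ up to the adjoint correction $(1+4w)^{-1}$, giving $p_{1}(\gamma_{\R})=2w$ with a coefficient that must be positive. Then $-2c_{2}(\gamma)=2w=2\alpha$, so $c_{2}(\gamma)=-\alpha$.

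To make the sign independent of conventions in \cite{quaternionic}, I would cross-check with a direct computation. Write $T\HP^{2}\oplus\mathfrak{sp}(\gamma)\cong\mathrm{Hom}_{\mathbb{H}}(\gamma,\gamma^{\perp})\oplus\mathrm{Hom}_{\mathbb{H}}(\gamma,\gamma)\cong\mathrm{Hom}_{\mathbb{H}}(\gamma,\underline{\mathbb{H}}^{3})$, which is $3\gamma$ up to conjugation as a real bundle. Compute $p_{1}$ of each side in terms of $c:=c_{2}(\gamma)$: the right side gives $-6c$; the adjoint bundle $\mathfrak{sp}(\gamma)\cong\mathfrak{su}(\gamma)$ has $p_{1}=-4c$. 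Hence $p_{1}(\HP^{2})=-2c$, and comparing with $p_{1}(\HP^{2})=2\alpha$ yields $c=-\alpha$. The delicate points are the adjoint-bundle identity $p_{1}(\mathfrak{su}(E))=-4c_{2}(E)$, which follows from the Chern roots $\pm x$ of $E$ giving roots $\pm2x,0$ of the complexified adjoint, and the identification of $\mathrm{Hom}_{\mathbb{H}}(\gamma,\mathbb{H})$ with $\bar\gamma$ as real bundles.
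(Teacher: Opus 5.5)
Your direct computation in Step 2 is correct and is the paper's own proof. The isomorphism $T\HP^{2}\oplus\operatorname{Hom}_{\mathbb H}(\gamma,\gamma)\cong 3\gamma_{\R}$ together with the Chern roots $\pm 2x,0$ of the complexified trace-free part gives $p_{1}(\HP^{2})=-6c+4c=-2c$, and torsion-freeness of $H^{4}(\HP^{2};\Z)$ then forces $c=-\alpha$. So Step 1 is unnecessary, and so is the heuristic via \cite{quaternionic}, which also misstates $p(\gamma_{\R})$: it is $(1+w)^{2}$, not $1+w$.

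The only slip is that $\operatorname{Hom}_{\mathbb H}(\gamma,\gamma)\cong\varepsilon_{\R}\oplus\mathfrak{sp}(\gamma)$, so your displayed isomorphism with $\mathfrak{sp}(\gamma)$ on the left is off by a trivial line bundle (ranks $11$ versus $12$). This is harmless for $p_{1}$.
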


\begin{proof}
Write $c_{2}=c_{2}(\gamma)$ and let $y_{1},y_{2}$ be the Chern roots of $\gamma$. As
$c_{1}(\gamma)=0$ one has $y_{2}=-y_{1}$, so that, writing $y=y_{1}$, the Chern roots
of $\gamma$ are $\pm y$ and $c_{2}=-y^{2}$.

One has
$T\HP^{2}\cong\operatorname{Hom}_{\mathbb H}(\gamma,\gamma^{\perp})$ \cite[p.~248]{characteristic}, where $\gamma^{\perp}$ denotes the orthogonal complement of $\gamma$ with
respect to the standard quaternionic Hermitian metric on
$\varepsilon_{\mathbb{H}}^{3}$. As $\operatorname{Hom}_{\mathbb H}(\gamma,-)$ is
additive and $\gamma\oplus\gamma^{\perp}=\varepsilon_{\mathbb{H}}^{3}$,
\begin{equation}\label{eq:tangentHP2}
T\HP^{2}\oplus\operatorname{Hom}_{\mathbb H}(\gamma,\gamma)
\;\cong\;\operatorname{Hom}_{\mathbb H}\bigl(\gamma,\varepsilon_{\mathbb{H}}^{3}\bigr)
\;\cong\;3\,\gamma_{\R}
\end{equation}
as real vector bundles, the second isomorphism arising from the identification of
$\operatorname{Hom}_{\mathbb{H}}(\gamma,\varepsilon_{\mathbb{H}})$ with $\gamma_{\R}$
induced by the metric.

Every endomorphism of a one-dimensional right $\mathbb{H}$-module is left
multiplication by a scalar, so that $\operatorname{Hom}_{\mathbb{H}}(\gamma,\gamma)$
is a bundle of real algebras of rank $4$ with fibre $\mathbb{H}$. Taking the trace
of an endomorphism of the underlying real vector bundle defines a bundle map
$\operatorname{tr}:\operatorname{Hom}_{\mathbb H}(\gamma,\gamma)\to\varepsilon_{\mathbb{R}}$, and
$t\mapsto\tfrac{1}{4}\,t\cdot\mathrm{id}$ splits it, since
$\operatorname{tr}(\mathrm{id})=4$. Hence
\begin{equation}\label{eq:endsplit}
\operatorname{Hom}_{\mathbb H}(\gamma,\gamma)\;\cong\;\varepsilon_{\mathbb{R}}\oplus\psi ,
\end{equation}
the trivial summand being spanned by the identity, and $\psi=\ker\operatorname{tr}$
the rank-$3$ bundle of trace-free, equivalently purely imaginary, endomorphisms
of $\gamma$.

The complex structure of $\gamma$ is right multiplication by $i$, whereas the elements
of $\operatorname{Hom}_{\mathbb{H}}(\gamma,\gamma)$ act on the fibres of $\gamma$ by left multiplication; by
associativity the two commute, so that these elements are $\C$-linear.
Hence $\operatorname{Hom}_{\mathbb{H}}(\gamma,\gamma)$ is a real subbundle
of $\operatorname{End}_{\C}(\gamma)$, of half its real rank. Complexifying the inclusion of $\operatorname{Hom}_{\mathbb{H}}(\gamma,\gamma)$ in $\operatorname{End}_{\C}(\gamma)$
yields a homomorphism of bundles of complex algebras
whose source has fibre $\mathbb{H}\otimes_{\R}\C\cong M_{2}(\C)$
\cite[Ch.~I, Proposition~4.2]{LawsonMichelsohn}. This algebra being simple and the two bundles having the same complex rank, the
homomorphism is injective, hence bijective, on each fibre, and therefore an
isomorphism of bundles, so that by equation~\eqref{eq:endsplit},
\[
\varepsilon_{\mathbb{C}}\oplus(\psi\otimes\C)
\;\cong\;\operatorname{Hom}_{\mathbb H}(\gamma,\gamma)\otimes\C
\;\cong\;\operatorname{End}_{\C}(\gamma)\;\cong\;\gamma\otimes\gamma^{*},
\]
where $\gamma^{*}$ denotes the dual of $\gamma$.

 The Chern roots of $\gamma\otimes\gamma^{*}$ are the four sums of $\pm y$ with
$\mp y$, namely $2y,\,0,\,0,\,-2y$, whence
$c(\gamma\otimes\gamma^{*})=(1+2y)(1-2y)=1-4y^{2}$. This yields $c_{2}(\psi\otimes\C)=-4y^{2}=4c_{2}$ and
therefore
\[
p_{1}(\psi)=-c_{2}(\psi\otimes\C)=-4c_{2}.
\]
 
For the underlying real vector bundle of $\gamma$ one has
$p_{1}(\gamma_{\R})=c_{1}(\gamma)^{2}-2c_{2}(\gamma)=-2c_{2}$. Since $H^{*}(\HP^{2};\Z)$ is torsion-free, $p_{1}$ is additive on
equations~\eqref{eq:tangentHP2} and \eqref{eq:endsplit}, whence
\[
p_{1}(\HP^{2})=3\,p_{1}(\gamma_{\R})-p_{1}(\psi)=-6c_{2}+4c_{2}=-2\,c_{2}.
\]
Comparison with Lemma~\ref{firstp1}, according to which $p_{1}(\HP^{2})=2\alpha$, yields
$c_{2}(\gamma)=-\alpha$.
\end{proof}

Consequently the underlying real vector bundle $\gamma_{\R}$, of rank $4$, oriented
by the complex structure of $\gamma$, satisfies
\begin{equation}\label{eq:gammaclasses}
p_{1}(\gamma_{\R})=c_{1}(\gamma)^{2}-2c_{2}(\gamma)=2\alpha,
\qquad
p_{2}(\gamma_{\R})=c_{2}(\gamma)^{2}=\alpha^{2},
\qquad
e(\gamma_{\R})=c_{2}(\gamma)=-\alpha .
\end{equation}

We now fix notation on $M_{\ell,m}$. Let $u\in H^{8}(M_{\ell,m};\Z)\cong\Z$ be the
generator determined by the chosen orientation, i.e.,
$\langle u,[M_{\ell,m}]\rangle=1$, and set
\[
\epsilon_{i}=
\begin{cases}
+1,&1\le i\le \ell,\\
-1,&\ell<i\le k.
\end{cases}
\]
For $1\le i\le k$ let $q_{i}:M_{\ell,m}\to\HP^{2}$ denote the
standard collapse map onto the $i$th summand, obtained by collapsing the
complement of that summand to a point; it has degree $\epsilon_{i}$. Put
\[
x_{i}:=q_{i}^{*}(\alpha)\in H^{4}(M_{\ell,m};\Z),\qquad 1\le i\le k,
\]
with $\alpha$ as in Lemma~\ref{firstp1}.

Write $\nu:\mathbb{S}^{7}\to\mathbb{S}^{4}$ for the Hopf map,  
and set $\nu_{i}=\epsilon_{i}\nu$ for $1\leq i\leq k$. Up to
homotopy equivalence,
\[
M_{\ell,m}\;=\;\Bigl(\bigvee\nolimits_{k}\mathbb{S}^{4}\Bigr)\cup_{\varphi}\mathbb{D}^{8},
\]
where the attaching map $\varphi:\mathbb{S}^{7}\to\bigvee_{k}\mathbb{S}^{4}$ is the
composite of the pinch map $\mathbb{S}^{7}\to\bigvee_{k}\mathbb{S}^{7}$
with $\nu_{1}\vee\dots\vee\nu_{k}$. Thus $M_{\ell,m}$ fits into the
cofibre sequence
\begin{equation}\label{eq:cofibration}
\mathbb{S}^{7}\;\xrightarrow{\ \varphi\ }\;\bigvee_{k}\mathbb{S}^{4}
\;\xhookrightarrow{\ \iota\ }\;M_{\ell,m}\;\xrightarrow{\ q\ }\;\mathbb{S}^{8},
\end{equation}
where $q$ denotes the degree-one collapse map, and $q_{i}\circ\iota$ carries
the $i$th sphere by a homotopy equivalence onto $\HP^{1}\subset\HP^{2}$ and
collapses the remaining ones to a point.

\begin{lemma}\label{lem:cohomology}
The cohomology $H^{*}(M_{\ell,m};\Z)$ is torsion-free and concentrated in degrees $0,4,8$; the classes $x_{1},\dots,x_{k}$ form a basis of $H^{4}(M_{\ell,m};\Z)\cong\Z^{k}$, and
\begin{equation}\label{eq:ring}
x_{i}x_{j}=0 \ (i\neq j),\qquad
x_{i}^{2}=\epsilon_{i}\,u .
\end{equation}
\end{lemma}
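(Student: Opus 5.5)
The plan is to read off additive structure from the cell decomposition and to compute products by pulling back along the collapse maps $q_{i}$.

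\emph{Additive structure.} From the CW structure $M_{\ell,m}\simeq(\bigvee_{k}\mathbb{S}^{4})\cup_{\varphi}\mathbb{D}^{8}$, the cellular cochain complex has cells only in dimensions $0,4,8$. All differentials therefore vanish, so $H^{*}(M_{\ell,m};\Z)$ is free and concentrated in degrees $0,4,8$, with $H^{4}\cong\Z^{k}$ and $H^{8}\cong\Z$. Equivalently, the cofibre sequence \eqref{eq:cofibration} gives $\iota^{*}:H^{4}(M_{\ell,m})\xrightarrow{\cong}H^{4}(\bigvee_{k}\mathbb{S}^{4})\cong\Z^{k}$, since $H^{3}(\mathbb{S}^{7})=H^{4}(\mathbb{S}^{7})=0$.

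\emph{Basis.} To see that $x_{1},\dots,x_{k}$ form a basis, compute $\iota^{*}x_{i}=(q_{i}\circ\iota)^{*}\alpha$. The map $q_{i}\circ\iota$ sends the $i$th sphere by a homotopy equivalence onto $\HP^{1}\subset\HP^{2}$ and collapses the others. The restriction $H^{4}(\HP^{2})\to H^{4}(\HP^{1})$ is an isomorphism, so $\iota^{*}x_{i}$ is $\pm$ the generator of the $i$th summand and zero on the others. Hence $\{\iota^{*}x_{i}\}$ is a basis up to signs, and so is $\{x_{i}\}$.

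\emph{Products.} For $i\neq j$, realise the connected sum geometrically and choose the collapse maps so that $q_{i}$ is constant off the $i$th summand minus a disc. Then $x_{i}$ is represented by a cochain, or via a Poincaré dual cycle $q_{i}^{-1}(\HP^{1})$, supported in the $i$th summand. Since the $i$th and $j$th summands are disjoint away from the neck, $x_{i}x_{j}=0$.

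A cleaner alternative for the mixed products uses the map $Q=(q_{1},\dots,q_{k}):M_{\ell,m}\to\prod\HP^{2}$. I would rather argue directly as follows. $x_{i}x_{j}$ lies in $H^{8}\cong\Z$, and it can be detected by pairing with $[M_{\ell,m}]$, equivalently as the intersection number of the dual 4-cycles. The dual of $x_{i}$ is a copy of $\HP^{1}$ inside the $i$th summand, and copies of $\HP^{1}$ in different summands are disjoint, so the intersection number is $0$.

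For $x_{i}^{2}$, naturality gives $x_{i}^{2}=q_{i}^{*}(\alpha^{2})$. Since $\langle\alpha^{2},[\HP^{2}]\rangle=1$ and $q_{i}$ has degree $\epsilon_{i}$, we get $\langle x_{i}^{2},[M_{\ell,m}]\rangle=\epsilon_{i}$, that is, $x_{i}^{2}=\epsilon_{i}u$.

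The main obstacle is making the vanishing $x_{i}x_{j}=0$ rigorous in the homotopy model. I would do this by noting that $q_{i}\times q_{j}$ factors up to homotopy through the subspace $\HP^{2}\vee\HP^{2}$, or more precisely through the map to $\HP^{2}\times\HP^{2}$ from the geometric connected sum, where the images of the $i$th and $j$th summands meet only at the basepoint. Then $\alpha\times\alpha$ pulls back through a space whose degree-8 cohomology pairs trivially with cross terms. Concretely, $(q_{i},q_{j})$ factors through $\HP^{2}\vee\HP^{2}\subset\HP^{2}\times\HP^{2}$, and on the wedge the class $\mathrm{pr}_{1}^{*}\alpha\cdot\mathrm{pr}_{2}^{*}\alpha$ restricts to zero because the reduced cohomology ring of a wedge is the product ring. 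This gives $x_{i}x_{j}=(q_{i},q_{j})^{*}(\mathrm{pr}_{1}^{*}\alpha\,\mathrm{pr}_{2}^{*}\alpha)=0$.
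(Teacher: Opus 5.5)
Your proof is correct. For the additive structure, for $x_i^2=\epsilon_i u$ (via $\deg q_i=\epsilon_i$), and for $x_ix_j=0$, it is essentially the paper's proof. Your final argument, factoring $(q_i,q_j)$ through $\HP^2\vee\HP^2\subset\HP^2\times\HP^2$ and using that the reduced cohomology ring of a wedge is the product ring, matches the paper's use of the collapse map $c\colon M_{\ell,m}\to\bigvee_r\HP^2$ with $q_r=\pi_r\circ c$. The cochain-support and intersection-number remarks before it are only heuristic as written and can be dropped.

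The genuine difference is in showing that $x_1,\dots,x_k$ is a basis. You use the cofibre sequence to get an isomorphism $\iota^*\colon H^4(M_{\ell,m};\Z)\cong H^4(\bigvee_k\mathbb{S}^4;\Z)$, and then compute $\iota^*x_i=(q_i\circ\iota)^*\alpha$ as $\pm$ the $i$th generator. The paper instead proves the product relations first and then argues via Poincar\'e duality. The intersection form on the torsion-free group $H^4$ is unimodular, and the Gram matrix of the $x_i$ is $\operatorname{diag}(\epsilon_1,\dots,\epsilon_k)$, of determinant $\pm1$. Hence the index $d$ of their span satisfies $d^2=1$.

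Your route is more direct and does not depend on the product computation. However, it rests on the claim that $q_i\circ\iota$ is a homotopy equivalence onto $\HP^1$ on the $i$th sphere and constant on the others. That is, it needs the homotopy-theoretic CW model to be compatible with the geometric collapse maps. The paper asserts this in its setup and relies on it anyway in Lemmas~\ref{lem:Ktheory} and~\ref{lem:KOtheory}. The paper's route for the basis uses only the ring relations and Poincar\'e duality, so it does not depend on how that CW model is identified with the connected sum.
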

\begin{proof}
Since the CW decomposition of $M_{\ell,m}$ described above consists
of one $0$-cell, $k$ $4$-cells and a single $8$-cell, the differentials of the
cellular cochain complex of $M_{\ell,m}$ vanish, so that $H^{*}(M_{\ell,m};\Z)$
is free and concentrated in degrees $0,4,8$, with
$H^{4}(M_{\ell,m};\Z)\cong\Z^{k}$ and $H^{8}(M_{\ell,m};\Z)\cong\Z$.

Since $\langle u,[M_{\ell,m}]\rangle=1$ and $H^{8}(M_{\ell,m};\Z)\cong\Z$, a class in
$H^{8}(M_{\ell,m};\Z)$ is determined by its Kronecker pairing with $[M_{\ell,m}]$,
so it is enough to evaluate the products $x_{r}x_{s}$ on the fundamental class.

Collapsing the separating spheres of the connected sum yields the canonical map
$c:M_{\ell,m}\to\bigvee\limits_{r=1}^{k}(\HP^{2})_{r}$ with $q_{r}=\pi_{r}\circ c$, where
$\pi_{r}$ denotes the projection onto the $r$th wedge summand. Products of
positive-dimensional classes coming from distinct summands of a wedge vanish, and
$c^{*}$ is a ring homomorphism; hence $x_{r}x_{s}=0$ for $r\neq s$.

For the squares, $x_{r}^{2}=q_{r}^{*}(\alpha^{2})$, and $q_{r}$ has degree
$\epsilon_{r}$, so that
\[
\bigl\langle x_{r}^{2},[M_{\ell,m}]\bigr\rangle
=\bigl\langle\alpha^{2},(q_{r})_{*}[M_{\ell,m}]\bigr\rangle
=\epsilon_{r}\bigl\langle\alpha^{2},[\HP^{2}]\bigr\rangle
=\epsilon_{r},
\]
that is, $x_{r}^{2}=\epsilon_{r}u$.

It remains to prove that $x_{1},\dots,x_{k}$ form a basis of $H^{4}(M_{\ell,m};\Z)$. Since $H^{*}(M_{\ell,m};\Z)$ is torsion-free, Poincar\'e duality implies that the pairing $(v,w)\mapsto\langle vw,[M_{\ell,m}]\rangle$ on $H^{4}(M_{\ell,m};\Z)$ is unimodular. By the relations just established, the Gram matrix of $x_{1},\dots,x_{k}$ with respect to this pairing is diagonal with entries $\epsilon_{1},\dots,\epsilon_{k}$, of determinant $(-1)^{m}$; in particular the classes $x_{1},\dots,x_{k}$ are linearly independent, and so span a subgroup of some finite index $d$ in $H^{4}(M_{\ell,m};\Z)\cong\Z^{k}$. The determinant of the pairing restricted to this subgroup equals $d^{2}$ times the determinant of the pairing on $H^{4}(M_{\ell,m};\Z)$, whence $d^{2}=1$ and the subgroup is all of $H^{4}(M_{\ell,m};\Z)$. Hence $x_{1},\dots,x_{k}$ is a basis, and relations~\eqref{eq:ring} holds. This completes the proof.
\end{proof}
Note that since the cellular cochain complex of $M_{\ell,m}$ with
$\Z_{2}$ coefficients is likewise concentrated in degrees $0,4,8$, we have
$H^{j}(M_{\ell,m};\Z_{2})=0$ for $j\neq 0,4,8$.

We now compute the Pontryagin classes of $M_{\ell,m}$; the result shows in
particular that the basis $\{x_{1},\dots,x_{k}\}$ of Lemma~\ref{lem:cohomology}
satisfies the normalisation  $p_{1}(M_{\ell,m})=2(x_{1}+\cdots+x_{k})$ imposed in
Section~\ref{sec:intro}.
\begin{theorem}\label{prop:p1ConnectedSum}
With $x_{i}=q_{i}^{*}(\alpha)$ as above,
\[
p_{1}(M_{\ell,m}) \;=\; 2\,(x_{1}+x_{2}+\cdots+x_{k}) \qquad \text{and}\qquad p_{2}(M_{\ell,m})=7(\ell-m)u.
\]
\end{theorem}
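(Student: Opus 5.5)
We will use the splitting of the stable tangent bundle of a connected sum that the introduction attributes to Theorem~\ref{thm:connectedsumtangent}. Iterated over the $k$ summands, it gives a stable isomorphism
\[
TM_{\ell,m}\oplus\varepsilon^{N}\;\cong\;\bigoplus_{i=1}^{k}q_{i}^{*}\bigl(T\HP^{2}\bigr)\oplus\varepsilon^{N'} .
\]
Here the $i$th summand is $\HP^{2}$ or $\overline{\HP^{2}}$, and reversing the orientation of a manifold does not change its tangent bundle as an unoriented real bundle. If one prefers not to rely on that theorem, the same conclusion can be reached directly. Let $c:M_{\ell,m}\to\bigvee_{r}(\HP^{2})_{r}$ be the collapse map and remove small discs from each summand. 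The stable tangent bundle of a punctured $\HP^{2}$ extends over the collapsed disc, since $\mathbb{D}^{8}$ is contractible. The connecting tubes $\mathbb{S}^{7}\times I$ are stably parallelisable, and their contribution is absorbed, so $TM_{\ell,m}$ is stably $c^{*}$ of the wedge of the stable tangent bundles.

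Pontryagin classes are stable invariants, and $H^{*}(M_{\ell,m};\Z)$ is torsion-free by Lemma~\ref{lem:cohomology}. The Whitney formula therefore holds exactly, and
\[
p(M_{\ell,m})=\prod_{i=1}^{k}q_{i}^{*}p(\HP^{2})=\prod_{i=1}^{k}\bigl(1+2x_{i}+7x_{i}^{2}\bigr),
\]
using Lemma~\ref{firstp1} and $x_{i}=q_{i}^{*}\alpha$. We then expand this product using relations~\eqref{eq:ring}. The degree-$4$ part is $2\sum_{i}x_{i}$. In degree $8$ the cross terms $4x_{i}x_{j}$ vanish for $i\neq j$, which leaves $\sum_{i}7x_{i}^{2}=7\sum_{i}\epsilon_{i}u=7(\ell-m)u$.

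As a consistency check, the signature theorem gives
\[
\sigma=\tfrac{1}{45}\bigl(7p_{2}-p_{1}^{2}\bigr)=\tfrac{1}{45}\bigl(49(\ell-m)-4(\ell-m)\bigr)=\ell-m,
\]
which is correct.

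The main obstacle is the sign of $p_{1}$ on each summand: we must confirm that $q_{i}^{*}p_{1}(\HP^{2})=2x_{i}$ rather than $-2x_{i}$ when the summand is reversed. This holds because $p_{1}$ depends only on the unoriented bundle, and $q_{i}$ pulls back the unoriented tangent bundle regardless of its degree. Orientation enters only through $x_{i}^{2}=\epsilon_{i}u$, which is already encoded in Lemma~\ref{lem:cohomology}. A more delicate point is checking that the local identification of $TM_{\ell,m}$ near the $i$th summand with $q_{i}^{*}T\HP^{2}$ extends globally, up to stable trivial summands. We would handle this by restricting to the $4$-skeleton $\bigvee_{k}\mathbb{S}^{4}$. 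There $p_{1}$ is detected, since $\iota^{*}$ is an isomorphism on $H^{4}$, and on the $i$th sphere the bundle visibly agrees with $T\HP^{2}|_{\HP^{1}}$. The top class $p_{2}$ is then pinned down by the signature computation above if needed: since $p_{1}^{2}=4(\ell-m)u$, the condition $\sigma=\ell-m$ forces $p_{2}=7(\ell-m)u$.
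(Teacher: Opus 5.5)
Your proposal is correct and follows essentially the same route as the paper. You iterate the stable splitting of Theorem~\ref{thm:connectedsumtangent}, use torsion-freeness to apply the Whitney product formula to $\prod_{i}(1+2x_{i}+7x_{i}^{2})$, and kill the cross terms with relations~\eqref{eq:ring}. Your extra remarks are sound but not needed for the paper's argument: $p_{1}$ is independent of orientation on the reversed summands, and the fallback via restriction to $\bigvee_{k}\mathbb{S}^{4}$ together with the signature theorem does indeed force $p_{2}=7(\ell-m)u$.
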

To prove this, we recall the description of the stable tangent bundle of the connected sum of two oriented manifolds, as detailed in \cite{acsoscocps}.

\begin{theorem}\cite{acsoscocps}\label{thm:connectedsumtangent}
Let $M$ and $N$ be two oriented smooth manifolds of the same dimension $d$. Let $p_{M}$ and $p_{N}$ denote the canonical collapsing maps $M\#N\rightarrow M$ and $M\#N\rightarrow N$ respectively. Then,
\[
T(M\#N)\oplus\varepsilon_{\mathbb{R}}^{d}\cong p_{M}^{*}(TM)\oplus p_{N}^{*}(TN)
\] as oriented real vector bundles.
\end{theorem}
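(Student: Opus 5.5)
The statement immediately above is Theorem~\ref{thm:connectedsumtangent}, the stable splitting $T(M\#N)\oplus\varepsilon_{\R}^{d}\cong p_{M}^{*}(TM)\oplus p_{N}^{*}(TN)$. I would prove it by decomposing $M\#N$ along its separating sphere and building the isomorphism on each half separately. Write $M\#N=M_{0}\cup_{S}N_{0}$, where $M_{0}=M\setminus\mathring{D}$, $N_{0}=N\setminus\mathring{D}'$, and $S\cong\mathbb{S}^{d-1}$ is the common boundary with a collar. The collapse map $p_{M}$ is the identity on $M_{0}$ and sends $N_{0}$ onto the disc $D\subset M$; symmetrically for $p_{N}$. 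Hence $p_{M}^{*}(TM)$ restricts to $TM_{0}$ on $M_{0}$ and to a trivial bundle on $N_{0}$, because it is pulled back from the contractible $D$. Likewise $p_{N}^{*}(TN)$ restricts to a trivial bundle on $M_{0}$ and to $TN_{0}$ on $N_{0}$.

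\emph{Isomorphism on each half.} On $M_{0}$ the right-hand side is $TM_{0}\oplus\varepsilon^{d}$, with the trivialisation $\varepsilon^{d}=TD$ pulled back through $p_{N}$. The left-hand side restricts to $TM_{0}\oplus\varepsilon^{d}$ as well. So on $M_{0}$ I take the identity on $TM_{0}$, and on the trivial summands the map $\varepsilon^{d}\to p_{N}^{*}TN|_{M_{0}}=M_{0}\times T_{0}D'$, i.e.\ the fixed trivialisation. On $N_{0}$ I do the same with the roles of $M$ and $N$ exchanged.

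\emph{Gluing over $S$.} Over the collar the two isomorphisms differ by a clutching function $g:S\to GL^{+}(2d)$, and the bundles agree globally exactly when $g$ extends over one of the halves. I would compute $g$ explicitly using the standard model of the connected sum. The identification $D\setminus 0\cong D'\setminus 0$ is given by inversion $x\mapsto x/|x|^{2}$, whose differential at $x\in S$ is the reflection $r_{x}$ in the hyperplane $x^{\perp}$. The orientation conventions make $g$ the block map
\[
x\mapsto\begin{pmatrix}0&\rho_{x}\\ I&0\end{pmatrix}
\]
or similar, with $\rho_{x}$ built from $r_{x}$ composed with a fixed reflection so that $\det\rho_{x}=+1$.

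\emph{Main obstacle.} The hardest step is showing that this $g$ is nullhomotopic in $GL^{+}(2d)$. The map $x\mapsto r_{x}\circ r_{e}$ represents the image of the generator under $\pi_{d-1}SO(d)\to\pi_{d-1}SO$. It is the clutching function of $T\mathbb{S}^{d}$, up to sign. That class dies after adding a trivial line, since $T\mathbb{S}^{d}\oplus\varepsilon\cong\varepsilon^{d+1}$, so it is certainly trivial in $\pi_{d-1}SO(2d)$. I would therefore show that $g$ is homotopic, by a rotation mixing the two blocks, to $\mathrm{diag}(\rho_{x},I)$. Its stabilisation then represents the stable class of $T\mathbb{S}^{d}$, which is $0$. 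This extends $g$ over $N_{0}$, which suffices.

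\emph{Orientations.} Finally I would check that the glued isomorphism preserves orientation. Since the isomorphism restricts to the identity on $TM_{0}$ and $TN_{0}$ away from the neck, this reduces to the orientation conventions of the connected sum: $M_{0}$ and $N_{0}$ carry their given orientations, and the inversion reverses orientation, as required.

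An alternative I would keep in reserve is the following. Embed $M\#N$ as the boundary of the boundary connected sum $(M\times I)\natural(N\times I)$, which deformation retracts onto $M\vee N$. Then $T(M\#N)\oplus\varepsilon$ is the restriction of the tangent bundle $W$ of this manifold, and $W$ is pulled back from $M\vee N$. On the wedge it restricts to $TM\oplus\varepsilon$ and $TN\oplus\varepsilon$. Pulling back via $c:M\#N\to M\vee N$ and adding trivial bundles gives the claim. Here the identification of $W|_{M\vee N}$ with the wedge of the two bundles is the key point, and it is immediate since a bundle over a wedge is determined by its restrictions once they agree at the basepoint.
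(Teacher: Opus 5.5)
The paper gives no proof of this statement. It is quoted from \cite{acsoscocps} and used only as a black box in the proof of Theorem~\ref{prop:p1ConnectedSum}, so there is no internal argument to compare yours with. Judged on its own, your clutching argument is a correct proof, and its decisive steps are right:
\begin{itemize}
\item over $M_{0}$ and $N_{0}$ both sides are the tangent bundle of that half plus $\varepsilon^{d}$;
\item the discrepancy over the neck is the block matrix built from $\rho_{x}=r_{e}r_{x}$, the derivative of the gluing map on the unit sphere;
\item moving a constant block swap to the identity turns it into $\mathrm{diag}(\rho_{x},I)$;
\item $x\mapsto\rho_{x}$ is the clutching map of $T\mathbb{S}^{d}$, which dies in $\pi_{d-1}SO(d+1)$ and hence in $\pi_{d-1}SO(2d)$.
\end{itemize}
Your sentence about ``the image of the generator under $\pi_{d-1}SO(d)\to\pi_{d-1}SO$'' should instead say that this clutching map is, up to sign, the image of $\iota_{d}$ under the boundary map $\pi_{d}(\mathbb{S}^{d})\to\pi_{d-1}SO(d)$ of $SO(d)\to SO(d+1)\to\mathbb{S}^{d}$.

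One piece of bookkeeping is wrong. $\det\begin{pmatrix}0&\rho_{x}\\ I&0\end{pmatrix}=(-1)^{d}\det\rho_{x}$, because the block swap has sign $(-1)^{d}$. This sign comes from reordering summands on $N_{0}$: there $TN_{0}$ is the first summand on the left but the second on the right. So for $d$ odd your $g$ does not land in $GL^{+}(2d)$ and cannot be null-homotopic there. You must first compose the trivialisation $\varepsilon^{d}\to p_{M}^{*}TM|_{N_{0}}$ with a reflection; then $\det g=+1$ and your argument goes through verbatim. For the same reason, your orientation paragraph does not just reduce to the conventions of the connected sum. The simplest way to finish is the trick in the proof of Theorem~\ref{thm:bundleacs}\,(iv): over a connected base an isomorphism either preserves or reverses orientation, and in the latter case you compose with a reflection in one coordinate of $\varepsilon^{d}$. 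For $d=8$, the only case the paper uses, this issue does not arise.

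Your reserve argument is also sound, and it is the more conceptual of the two. Note that $M\#N$ is only one boundary component of $W=(M\times I)\natural(N\times I)$; the others are copies of $M$ and $N$. The one point you must verify is that $M\#N\hookrightarrow W\simeq M\vee N$ is homotopic to the collapse $c$. After that, the result follows because oriented bundles over a wedge are determined by their restrictions, $GL^{+}$ being connected.

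Comparing the two routes:
\begin{itemize}
\item The cobordism route needs no clutching computation. It also gives the sharper rank-$(d+1)$ statement $T(M\#N)\oplus\varepsilon\cong c^{*}E$, where $E|_{M}=TM\oplus\varepsilon$ and $E|_{N}=TN\oplus\varepsilon$.
\item The clutching route instead identifies the single obstruction explicitly, namely the clutching class of $T\mathbb{S}^{d}$, and shows why one trivial line already kills it.
\end{itemize}
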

\begin{proof}[Proof of Theorem~\ref{prop:p1ConnectedSum}]
Iterating Theorem~\ref{thm:connectedsumtangent} over the $k$ summands shows
that $TM_{\ell,m}$ is stably isomorphic to
$\bigoplus_{i=1}^{k}q_{i}^{*}(T\HP^{2})$. As the total Pontryagin class is
stable and, the cohomology of $M_{\ell,m}$ being torsion-free, multiplicative
under Whitney sum, Lemma~\ref{firstp1} yields
\[
p(M_{\ell,m})=\prod_{i=1}^{k}q_{i}^{*}\bigl(p(\HP^{2})\bigr)
=\prod_{i=1}^{k}\bigl(1+2x_{i}+7x_{i}^{2}\bigr).
\]
Every cross term in this product vanishes, since $x_{i}x_{j}=0$ for $i\neq j$
and $x_{i}u=0$; hence, using $x_{i}^{2}=\epsilon_{i}u$ and
$\sum_{i=1}^{k}\epsilon_{i}=\ell-m$,
\[
p(M_{\ell,m})=1+2\sum_{i=1}^{k}x_{i}+7\sum_{i=1}^{k}x_{i}^{2}
=1+2\sum_{i=1}^{k}x_{i}+7(\ell-m)u ,
\]
which is the assertion.
\end{proof}

\section{Existence of stable complex structures on bundles over \texorpdfstring{$M_{\ell,m}$}{M\_\{l,m\}}}\label{sec:stable}
We now derive a necessary and sufficient arithmetic condition for a bundle over
$M_{\ell,m}$ to admit a stable complex structure. The argument involves 
computations of the reduced complex and real $K$-groups $\widetilde{K}(M_{\ell,m})$
and $\widetilde{KO}(M_{\ell,m})$, together with the effect of realification on
Pontryagin classes.

Throughout this section we retain the notation of Sections~\ref{sec:intro}
and~\ref{sec:prelim}. Thus $\gamma$ denotes the tautological quaternionic line
bundle over $\HP^{2}$, normalised by equation~\eqref{eq:normalisation} and with
characteristic classes as in equation~\eqref{eq:gammaclasses}, and
$x_{i}=q_{i}^{*}(\alpha)$, with $q_{i}$ the collapse map of Section~\ref{sec:prelim}. We write $\operatorname{ch}$ for the
Chern character, with values in $H^{*}(M_{\ell,m};\Q)$, and $\operatorname{ch}_{j}$
for its component in $H^{2j}(M_{\ell,m};\Q)$.

We denote by $\xi$ a real vector bundle over $M_{\ell,m}$, and by
$a_{1},\dots,a_{k}$, $b$ the integers attached to it in Section~\ref{sec:intro},
so that
\begin{equation}\label{eq:invariants}
p_{1}(\xi)=2\sum_{i=1}^{k}a_{i}x_{i},
\qquad
p_{2}(\xi)=b\,u .
\end{equation}
Recall that they exist because $w_{2}(\xi)=0$; the mod~$2$ reduction of $p_{1}(\xi)$
equals $w_{2}(\xi)^{2}$ \cite[p.~181]{characteristic} and therefore vanishes, so that
$p_{1}(\xi)$ is divisible by $2$. They are unique because $\{x_{1},\dots,x_{k}\}$ is a
basis of the torsion-free group $H^{4}(M_{\ell,m};\Z)$.
\begin{lemma}\label{lem:Ktheory}
The group $\widetilde{K}(M_{\ell,m})$ is free abelian of rank $k+1$, with basis
$\{\eta_{1},\dots,\eta_{k},\theta\}$, where $\eta_{i}=q_{i}^{*}\bigl([\gamma]-2\bigr)$
and $\theta=q^{*}(\beta)$ for a suitable generator $\beta$ of
$\widetilde{K}(\mathbb{S}^{8})$. Moreover
\[
\operatorname{ch}(\eta_{i})=x_{i}+\tfrac{1}{12}\epsilon_{i}u,
\qquad
\operatorname{ch}(\theta)=u .
\]
\end{lemma}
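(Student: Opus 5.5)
We use the cofibre sequence \eqref{eq:cofibration} and the exactness of reduced $K$-theory. Since $\widetilde{K}^{-1}$ of an even sphere vanishes and $\widetilde{K}(\mathbb{S}^{4})\cong\Z$, the long exact sequence associated to $\bigvee_{k}\mathbb{S}^{4}\hookrightarrow M_{\ell,m}\to\mathbb{S}^{8}$ reduces to a short exact sequence
\[
0\to\widetilde{K}(\mathbb{S}^{8})\xrightarrow{\ q^{*}\ }\widetilde{K}(M_{\ell,m})\xrightarrow{\ \iota^{*}\ }\widetilde{K}\Bigl(\bigvee\nolimits_{k}\mathbb{S}^{4}\Bigr)\cong\Z^{k}\to 0 .
\]
The left end is exact because the preceding term $\widetilde{K}^{-1}(\bigvee_{k}\mathbb{S}^{4})$ vanishes. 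The right end is exact because the next term $\widetilde{K}^{1}(\mathbb{S}^{8})$ vanishes. Alternatively, the Atiyah--Hirzebruch spectral sequence collapses, since the cohomology is even and torsion-free. It follows that $\widetilde{K}(M_{\ell,m})$ is free of rank $k+1$.

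The first step is to check that $\eta_{1},\dots,\eta_{k}$ map under $\iota^{*}$ to a basis of $\widetilde{K}(\bigvee_{k}\mathbb{S}^{4})$. The map $q_{i}\circ\iota$ collapses every sphere other than the $i$th, and carries the $i$th one by a homotopy equivalence onto $\HP^{1}=\mathbb{S}^{4}$. Hence $\iota^{*}\eta_{i}$ is supported on the $i$th wedge summand, where it equals the restriction of $[\gamma]-2$ to $\HP^{1}$. That restriction generates $\widetilde{K}(\mathbb{S}^{4})$, because the Chern character is an isomorphism onto $H^{4}(\mathbb{S}^{4};\Z)$ and $\operatorname{ch}_{2}=-c_{2}$ on a bundle with $c_{1}=0$. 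With $c_{2}(\gamma)|_{\HP^{1}}=-\alpha|_{\HP^{1}}$ a generator, the $\iota^{*}\eta_{i}$ therefore form a basis. Taking $\beta$ to be any generator, for instance the one with $\operatorname{ch}(\beta)$ equal to the orientation class of $\mathbb{S}^{8}$ (Bott integrality, since $\operatorname{ch}$ maps $\widetilde{K}(\mathbb{S}^{8})$ isomorphically onto $H^{8}(\mathbb{S}^{8};\Z)$), the exact sequence splits and $\{\eta_{1},\dots,\eta_{k},\theta\}$ is a basis. Moreover $\operatorname{ch}(\theta)=q^{*}\operatorname{ch}(\beta)=u$, because $q$ has degree one.

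The main computation is $\operatorname{ch}(\eta_{i})$. By naturality, $\operatorname{ch}(\eta_{i})=q_{i}^{*}\operatorname{ch}([\gamma]-2)$, so it suffices to compute $\operatorname{ch}(\gamma)$ on $\HP^{2}$. The Chern roots of $\gamma$ are $\pm y$ with $-y^{2}=c_{2}=-\alpha$, so $y^{2}=\alpha$. Therefore
\[
\operatorname{ch}(\gamma)-2=e^{y}+e^{-y}-2=y^{2}+\tfrac{1}{12}y^{4}=\alpha+\tfrac{1}{12}\alpha^{2}.
\]
This can also be derived without splitting formally: for $c_{1}=0$ one has $\operatorname{ch}_{2}=-c_{2}$ and $\operatorname{ch}_{4}=\tfrac{1}{12}c_{2}^{2}$. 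Pulling back along $q_{i}$ and using $x_{i}^{2}=\epsilon_{i}u$ from Lemma~\ref{lem:cohomology} gives
\[
\operatorname{ch}(\eta_{i})=x_{i}+\tfrac{1}{12}\epsilon_{i}u .
\]

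The only delicate points are the sign conventions. The sign of $\operatorname{ch}_{2}$ must be taken from Lemma~\ref{lem:gammanormalisation}, and the sign of $\beta$ is fixed by the requirement $\operatorname{ch}(\theta)=u$ rather than $-u$. Both are bookkeeping rather than genuine obstacles.
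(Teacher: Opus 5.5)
Your proposal is correct and follows essentially the same route as the paper. Both arguments use the short exact sequence from the cofibre sequence, with the outer terms vanishing by Bott periodicity. Both check that $\iota^{*}\eta_{i}$ generates the $i$th summand via $\operatorname{ch}_{2}=-c_{2}$ on $\HP^{1}$, and both compute $\operatorname{ch}(\gamma)=2+\alpha+\tfrac{1}{12}\alpha^{2}$ from the Chern roots $\pm y$ with $y^{2}=\alpha$. The only difference is cosmetic: you fix the sign of $\beta$ in advance, whereas the paper notes $\operatorname{ch}(\theta)=\pm u$ and then replaces $\beta$ by $-\beta$ if necessary.
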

\begin{proof}
Applying $\widetilde{K}(-)$ to the cofibre sequence~\eqref{eq:cofibration} yields an
exact sequence
\[
\widetilde{K}^{-1}\Bigl(\bigvee_{k}\mathbb{S}^{4}\Bigr)\to\widetilde{K}(\mathbb{S}^{8})
\xrightarrow{\,q^{*}\,}\widetilde{K}(M_{\ell,m})
\xrightarrow{\,\iota^{*}\,}\widetilde{K}\Bigl(\bigvee_{k}\mathbb{S}^{4}\Bigr)
\to\widetilde{K}^{1}(\mathbb{S}^{8}) .
\]
The first of the two outer groups is $\widetilde{K}(\bigvee_{k}\mathbb{S}^{5})$ by
the suspension isomorphism, and the second is $\widetilde{K}(\mathbb{S}^{9})$ by Bott
periodicity; both vanish, the reduced $K$-theory of an odd-dimensional sphere being
zero \cite{Bott}. The sequence therefore reduces to a
short exact sequence
\begin{equation}\label{eq:Kses}
0\to\widetilde{K}(\mathbb{S}^{8})
\xrightarrow{\,q^{*}\,}\widetilde{K}(M_{\ell,m})
\xrightarrow{\,\iota^{*}\,}\widetilde{K}\Bigl(\bigvee_{k}\mathbb{S}^{4}\Bigr)
=\bigoplus_{i=1}^{k}\widetilde{K}(\mathbb{S}^{4})\to 0 ;
\end{equation}
in particular $q^{*}$ is injective, and its image is the infinite cyclic group
generated by $\theta=q^{*}(\beta)$, where $\beta$ is a generator of
$\widetilde{K}(\mathbb{S}^{8})\cong\Z$ whose sign will be fixed below.

Since $c_{1}(\gamma)=0$ and $c_{2}(\gamma)=-\alpha$, the Chern roots of $\gamma$
are $\pm y$ with $y^{2}=\alpha$, whence
\begin{equation}\label{eq:chgamma}
\operatorname{ch}(\gamma)=e^{y}+e^{-y}=2+\alpha+\tfrac{1}{12}\alpha^{2} .
\end{equation}

Let $j:\HP^{1}\hookrightarrow\HP^{2}$ denote the inclusion, put
$\lambda=j^{*}\bigl([\gamma]-2\bigr)\in\widetilde{K}(\mathbb{S}^{4})$, and let
$\lambda_{i}$ be the corresponding element of the $i$th summand of $\bigoplus\limits_{i=1}^{k}\widetilde{K}(\mathbb{S}^{4})$. Applying $j^{*}$ to equation~\eqref{eq:chgamma} and using
$j^{*}(\alpha^{2})=\bigl(j^{*}(\alpha)\bigr)^{2}\in H^{8}(\mathbb{S}^{4};\Z)=0$ yields
$\operatorname{ch}(\lambda)=j^{*}(\alpha)$, which generates $H^{4}(\mathbb{S}^{4};\Z)$
because $j^{*}$ is an isomorphism on $H^{4}$, $\HP^{2}$ being obtained from
$\HP^{1}$ by attaching an $8$-cell. Since $\operatorname{ch}$ maps $\widetilde{K}(\mathbb{S}^{4})\cong\Z$ isomorphically
onto the image of $H^{4}(\mathbb{S}^{4};\Z)$ in $H^{4}(\mathbb{S}^{4};\Q)$
(see proof of Corollary~5.2 in~\cite{adamsvect}), $\lambda$ generates $\widetilde{K}(\mathbb{S}^{4})$, and
$\lambda_{1},\dots,\lambda_{k}$ is therefore a basis of
$\bigoplus_{i=1}^{k}\widetilde{K}(\mathbb{S}^{4})$.

Since $\eta_{i}=q_{i}^{*}\bigl([\gamma]-2\bigr)$, functoriality yields
$\iota^{*}(\eta_{i})=(q_{i}\circ\iota)^{*}\bigl([\gamma]-2\bigr)$. Now $q_{i}\circ\iota$ carries the $i$th sphere by a homotopy equivalence
onto $\HP^{1}$ and collapses the remaining ones to a point; its restriction to
the $i$th summand therefore pulls $[\gamma]-2$ back to $\pm\lambda$, while the other
summands contribute $0$. Thus $\iota^{*}(\eta_{i})$ generates the $i$th summand
of $\bigoplus\limits_{i=1}^{k}\widetilde{K}(\mathbb{S}^{4})$, and hence $\iota^{*}(\eta_{i})\mapsto\eta_{i}$ defines a section $s$
of $\iota^{*}$, so that exact sequence~\eqref{eq:Kses} splits;
\[
\widetilde{K}(M_{\ell,m})
=q^{*}\bigl(\widetilde{K}(\mathbb{S}^{8})\bigr)\oplus
s\Bigl(\widetilde{K}\Bigl(\bigvee_{k}\mathbb{S}^{4}\Bigr)\Bigr)
=\Z\{\theta\}\oplus\bigoplus_{i=1}^{k}\Z\{\eta_{i}\} ,
\]
which is free abelian of rank $k+1$ with the asserted basis.

It remains to compute Chern characters. Since $\operatorname{ch}$ is natural and
additive, $\operatorname{ch}(\eta_{i})=q_{i}^{*}\bigl(\operatorname{ch}(\gamma)-2\bigr)$;
substituting equation~\eqref{eq:chgamma}, applying the ring homomorphism $q_{i}^{*}$ and
using $x_{i}^{2}=\epsilon_{i}u$ yields
\[
\operatorname{ch}(\eta_{i})
=q_{i}^{*}\Bigl(\alpha+\tfrac{1}{12}\alpha^{2}\Bigr)
=x_{i}+\tfrac{1}{12}x_{i}^{2}
=x_{i}+\tfrac{1}{12}\epsilon_{i}u .
\]
Similarly $\operatorname{ch}(\theta)=q^{*}\operatorname{ch}(\beta)$. Here
$\operatorname{ch}(\beta)$ generates $H^{8}(\mathbb{S}^{8};\Z)$, since $\operatorname{ch}$ maps $\widetilde{K}(\mathbb{S}^{8})\cong\Z$ isomorphically
onto the image of $H^{8}(\mathbb{S}^{8};\Z)$ in $H^{8}(\mathbb{S}^{8};\Q)$ (see proof of Corollary~5.2 in~\cite{adamsvect}), and $q^{*}$ carries either generator of $H^{8}(\mathbb{S}^{8};\Z)$ to $\pm u$,
because $q$ has degree one. Thus $\operatorname{ch}(\theta)=\pm u$, and replacing
$\beta$ by $-\beta$ if necessary we may assume $\operatorname{ch}(\theta)=u$. This
fixes the generator $\beta$ and completes the proof.
\end{proof}
\begin{lemma}\label{lem:KOtheory}
The group $\widetilde{KO}(M_{\ell,m})$ is free abelian of rank $k+1$, with basis
$\{\rho_{1},\dots,\rho_{k},\sigma\}$, where
$\rho_{i}=q_{i}^{*}\bigl([\gamma_{\R}]-4\bigr)$ and $\sigma=q^{*}(\tau)$ for a
suitable generator $\tau$ of $\widetilde{KO}(\mathbb{S}^{8})$. For
$\zeta=\sum_{i=1}^{k}m_{i}\rho_{i}+n\sigma$ one has
\begin{equation}\label{eq:KOpontryagin}
p_{1}(\zeta)=2\sum_{i=1}^{k}m_{i}x_{i},
\qquad
p_{2}(\zeta)=\Bigl(\sum_{i=1}^{k}\epsilon_{i}m_{i}(2m_{i}-1)+6n\Bigr)u .
\end{equation}
Consequently, writing $p_{1}(\zeta)=2\sum_{i=1}^{k}a_{i}x_{i}$ and
$p_{2}(\zeta)=b\,u$ as in Section~\ref{sec:intro}, the
map $\zeta\mapsto(a_{1},\dots,a_{k};b)$ is injective on
$\widetilde{KO}(M_{\ell,m})$, and a tuple $(a_{1},\dots,a_{k};b)$ arises from a
real vector bundle over $M_{\ell,m}$ if and only if
\begin{equation}\label{eq:realizable}
b\equiv\sum_{i=1}^{k}\epsilon_{i}a_{i}(2a_{i}-1)\pmod{6}.
\end{equation}
\end{lemma}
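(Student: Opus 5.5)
The plan is to run the argument of Lemma~\ref{lem:Ktheory} with $\widetilde{KO}$ in place of $\widetilde{K}$, and then read off Pontryagin classes.

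\emph{Group structure.} Apply $\widetilde{KO}(-)$ to the cofibre sequence~\eqref{eq:cofibration}. The outer terms are $\widetilde{KO}^{-1}(\bigvee_k\mathbb{S}^4)\cong\bigoplus_k\widetilde{KO}(\mathbb{S}^5)$ and $\widetilde{KO}^{1}(\mathbb{S}^8)\cong\widetilde{KO}(\mathbb{S}^7)$. Both vanish by Bott periodicity, since $\widetilde{KO}(\mathbb{S}^n)$ is $\Z,\Z_2,\Z_2,0,\Z,0,0,0$ for $n\equiv 0,\dots,7\pmod 8$. This gives a short exact sequence
\[
0\to\widetilde{KO}(\mathbb{S}^8)\cong\Z\to\widetilde{KO}(M_{\ell,m})\to\bigoplus_{i=1}^k\widetilde{KO}(\mathbb{S}^4)\to 0 .
\]
\begin{itemize}
\item The group $\widetilde{KO}(\mathbb{S}^4)\cong\Z$ is generated by the class of the quaternionic line bundle, namely $j^*([\gamma_\R]-4)$. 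To see this, note that $p_1$ of this class is $2j^*(\alpha)$, and for a generator of $\widetilde{KO}(\mathbb{S}^4)$ one has $p_1=\pm2$ times a generator of $H^4$.
\item The classes $\rho_i$ therefore map to a basis of the quotient, exactly as the $\eta_i$ did in Lemma~\ref{lem:Ktheory}, and the sequence splits.
\end{itemize}
Hence $\widetilde{KO}(M_{\ell,m})$ is free with basis $\rho_1,\dots,\rho_k,\sigma$.

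\emph{Pontryagin classes.} Because $H^*(M_{\ell,m};\Z)$ is torsion-free, the total Pontryagin class is a homomorphism from $\widetilde{KO}(M_{\ell,m})$ to the multiplicative group of units $1+H^{4}+H^{8}$. This holds for virtual bundles as well, since we work with formal inverses. By \eqref{eq:gammaclasses} and \eqref{eq:ring},
\[
p(\rho_i)=q_i^*(1+2\alpha+\alpha^2)=1+2x_i+\epsilon_i u .
\]
Raising to the power $m_i\in\Z$ with the binomial series and using $x_i^3=0$ gives
\[
p(\rho_i)^{m_i}=1+2m_ix_i+\Bigl(m_i+4\tbinom{m_i}{2}\Bigr)\epsilon_i u=1+2m_ix_i+\epsilon_i m_i(2m_i-1)u ,
\]
valid also for negative $m_i$. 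The cross terms between different $i$ vanish by \eqref{eq:ring}.

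For $\sigma=q^*(\tau)$ we have $p(\sigma)=1+p_2(\sigma)$, which multiplies trivially against all positive-degree classes. It therefore remains to show that $p_2(\tau)=\pm6$ times a generator of $H^8(\mathbb{S}^8;\Z)$, and to choose the sign of $\tau$ so that $p_2(\sigma)=6u$.

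\emph{Main obstacle: $p_2(\tau)=\pm 6$.} I expect this to be the only real step. I would argue as follows.
\begin{itemize}
\item Complexification $\widetilde{KO}(\mathbb{S}^8)\to\widetilde{K}(\mathbb{S}^8)$ is an isomorphism in dimension $8\bmod 8$, so $c(\tau)=\pm\beta$.
\item By Lemma~\ref{lem:Ktheory} and its proof, $\operatorname{ch}(\beta)$ generates the integral lattice of $H^8(\mathbb{S}^8)$.
\item On $\mathbb{S}^8$ only $c_4$ can be nonzero, so Newton's identity gives $s_4=-4c_4$. Hence $\operatorname{ch}_4=s_4/4!=-c_4/6$, and so $c_4(c(\tau))=\pm6$ times a generator.
\item Since $p_2(\tau)=c_4(\tau\otimes\C)$, we get $p_2(\tau)=\pm 6$ times a generator.
\end{itemize}
Multiplying everything out then yields \eqref{eq:KOpontryagin}.

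\emph{Consequences.} Given \eqref{eq:KOpontryagin}, the tuple determines $\zeta$: we recover $m_i=a_i$ from $p_1$, and then $n$ from
\[
6n=b-\sum_{i=1}^k\epsilon_ia_i(2a_i-1).
\]
So the map $\zeta\mapsto(a_1,\dots,a_k;b)$ is injective. A tuple arises from some $\zeta$ exactly when this $n$ is an integer, which is the congruence \eqref{eq:realizable}.

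Finally, every class in $\widetilde{KO}$ of the finite complex $M_{\ell,m}$ is represented by an actual bundle minus a trivial one. Adding a trivial bundle changes neither $p_1$ nor $p_2$. Hence \eqref{eq:realizable} characterises the tuples coming from real vector bundles.
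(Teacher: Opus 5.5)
Your proposal is correct and follows the paper's proof essentially step for step. It uses the same short exact sequence from the cofibre sequence~\eqref{eq:cofibration}, the same splitting via the classes $\rho_{i}$ detected by $p_{1}$, the same binomial computation of $p(\rho_{i})^{m_{i}}$, and the same reading-off of injectivity and of the congruence~\eqref{eq:realizable}. The one local difference is the step $p_{2}(\tau)=\pm 6$ times a generator: the paper obtains it from Milnor's lemma $p_{r}=\pm\kappa_{r}(2r-1)!\,\mathfrak{o}$, whereas you derive it from the complexification isomorphism $\widetilde{KO}(\mathbb{S}^{8})\cong\widetilde{K}(\mathbb{S}^{8})$, the integrality of $\operatorname{ch}$ already used in Lemma~\ref{lem:Ktheory}, and Newton's identity $\operatorname{ch}_{4}=-c_{4}/6$, which is equally valid and slightly more self-contained.
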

\begin{proof}
Applying $\widetilde{KO}(-)$ to the cofibre sequence \eqref{eq:cofibration} yields an
exact sequence
\[
\widetilde{KO}^{-1}\Bigl(\bigvee_{k}\mathbb{S}^{4}\Bigr)\to\widetilde{KO}(\mathbb{S}^{8})
\xrightarrow{\,q^{*}\,}\widetilde{KO}(M_{\ell,m})
\xrightarrow{\,\iota^{*}\,}\widetilde{KO}\Bigl(\bigvee_{k}\mathbb{S}^{4}\Bigr)
\to\widetilde{KO}^{1}(\mathbb{S}^{8}) .
\]
By the suspension isomorphism
$\widetilde{KO}^{-1}(\bigvee_{k}\mathbb{S}^{4})\cong\bigoplus_{k}\widetilde{KO}(\mathbb{S}^{5})$,
which vanishes, while $\widetilde{KO}^{1}(\mathbb{S}^{8})\cong KO^{-7}(\mathrm{pt})=0$
by Bott periodicity \cite{Bott}. So the sequence
reduces to a short exact sequence
\begin{equation}\label{eq:KOses}
0\to\widetilde{KO}(\mathbb{S}^{8})
\xrightarrow{\,q^{*}\,}\widetilde{KO}(M_{\ell,m})
\xrightarrow{\,\iota^{*}\,}\widetilde{KO}\Bigl(\bigvee_{k}\mathbb{S}^{4}\Bigr)
=\bigoplus_{i=1}^{k}\widetilde{KO}(\mathbb{S}^{4})\to 0 ;
\end{equation}
in particular $q^{*}$ is injective, and its image is the infinite cyclic group
generated by $\sigma=q^{*}(\tau)$, where $\tau$ is a generator of
$\widetilde{KO}(\mathbb{S}^{8})\cong\Z$ whose sign will be fixed below.

We now identify the two outer groups appearing in the exact sequence~\eqref{eq:KOses}. For every
$r\geq 1$ the group $\widetilde{KO}(\mathbb{S}^{4r})$ is infinite cyclic \cite{Bott}, and
$p_{r}$ maps it isomorphically onto the subgroup
\[
\kappa_{r}(2r-1)!\,H^{4r}(\mathbb{S}^{4r};\Z),\qquad
\kappa_{r}=
\begin{cases}
2,&r\ \text{odd},\\
1,&r\ \text{even},
\end{cases}
\]
of $H^{4r}(\mathbb{S}^{4r};\Z)$. Indeed, a stable bundle $\xi_{0}$ over $\mathbb{S}^{4r}$ admits a trivialisation on
the $(4r-1)$-skeleton, and the obstruction $\mathfrak{o}(\xi_{0})\in
H^{4r}\bigl(\mathbb{S}^{4r};\pi_{4r-1}(SO)\bigr)\cong\Z$ to extending it over the top
cell is the clutching class of $\xi_{0}$, whence the obstruction map $\mathfrak{o}\colon\widetilde{KO}(\mathbb{S}^{4r})\to
H^{4r}(\mathbb{S}^{4r};\Z)$ is an isomorphism. The assertion follows since
$p_{r}(\xi_{0})=\pm\kappa_{r}(2r-1)!\,\mathfrak{o}(\xi_{0})$ \cite[Lemma~2]{milnor}.

For $r=1$ this image is $2H^{4}(\mathbb{S}^{4};\Z)$. Put
$\mu=j^{*}\bigl([\gamma_{\R}]-4\bigr)\in\widetilde{KO}(\mathbb{S}^{4})$, with $\mu_{i}$ the
corresponding element of the $i$th summand of $\bigoplus\limits_{i=1}^{k}\widetilde{KO}(\mathbb{S}^{4})$. Applying $j^{*}$
to equation~\eqref{eq:gammaclasses} yields $p_{1}(\mu)=2j^{*}(\alpha)$, a generator of
$2H^{4}(\mathbb{S}^{4};\Z)$; hence $\mu$ generates $\widetilde{KO}(\mathbb{S}^{4})$, and
$\mu_{1},\dots,\mu_{k}$ is a basis of $\bigoplus_{i=1}^{k}\widetilde{KO}(\mathbb{S}^{4})$.

For $r=2$ the image is $6H^{8}(\mathbb{S}^{8};\Z)$, so that $p_{2}(\tau)$ generates it. Since $q$ has
degree one, $q^{*}$ carries either generator of $H^{8}(\mathbb{S}^{8};\Z)$ to $\pm u$, so
that $p_{2}(\sigma)=q^{*}p_{2}(\tau)=\pm 6u$ by naturality; replacing $\tau$ by
$-\tau$ if necessary, we may assume
\begin{equation}\label{eq:psigma}
p_{1}(\sigma)=0,\qquad p_{2}(\sigma)=6u ,
\end{equation}
the first equality holding because $H^{4}(\mathbb{S}^{8};\Z)=0$. This fixes the generator
$\tau$.

Since $\rho_{i}=q_{i}^{*}\bigl([\gamma_{\R}]-4\bigr)$, functoriality yields
$\iota^{*}(\rho_{i})=(q_{i}\circ\iota)^{*}\bigl([\gamma_{\R}]-4\bigr)$. Now $q_{i}\circ\iota$ restricts to a homotopy equivalence from the $i$th sphere
onto $\HP^{1}$ and collapses the remaining ones to a point, so it pulls
$[\gamma_{\R}]-4$ back to $\pm\mu$ on the $i$th summand and to $0$ on the others.
Thus $\iota^{*}(\rho_{i})$ generates the $i$th summand of $\bigoplus\limits_{i=1}^{k}\widetilde{KO}(\mathbb{S}^{4})$, and
hence $\iota^{*}(\rho_{i})\mapsto\rho_{i}$ defines a section $s$ of $\iota^{*}$, so
that the exact sequence~\eqref{eq:KOses} splits;
\[
\widetilde{KO}(M_{\ell,m})=q^{*}\bigl(\widetilde{KO}(\mathbb{S}^{8})\bigr)\oplus
s\Bigl(\widetilde{KO}\Bigl(\bigvee_{k}\mathbb{S}^{4}\Bigr)\Bigr)
=\Z\{\sigma\}\oplus\bigoplus_{i=1}^{k}\Z\{\rho_{i}\} ,
\]
which is free abelian of rank $k+1$ with the asserted basis.

We turn to the Pontryagin classes. As $H^{*}(M_{\ell,m};\Z)$ is torsion-free, the
total Pontryagin class defines a homomorphism from $\widetilde{KO}(M_{\ell,m})$
to the multiplicative group $1+H^{4}(M_{\ell,m};\Z)+H^{8}(M_{\ell,m};\Z)$. Hence, for
$\zeta=\sum_{i=1}^{k}m_{i}\rho_{i}+n\sigma$,
\[
p(\zeta)=\prod_{i=1}^{k}p(\rho_{i})^{m_{i}}\cdot p(\sigma)^{n} .
\]
By equation~\eqref{eq:gammaclasses} we have $p(\gamma_{\R})=1+2\alpha+\alpha^{2}$;
applying $q_{i}^{*}$ and using $x_{i}^{2}=\epsilon_{i}u$ yields
$p(\rho_{i})=1+2x_{i}+\epsilon_{i}u$, while $p(\sigma)=1+6u$ by equation~\eqref{eq:psigma}. Put $A_{i}=2x_{i}+\epsilon_{i}u$. By Lemma~\ref{lem:cohomology},
$A_{i}^{2}=4x_{i}^{2}=4\epsilon_{i}u$ and $A_{i}^{3}=0$, so that the binomial series
for $(1+A_{i})^{m_{i}}$ terminates after two terms and, the binomial coefficient
$\binom{m_{i}}{2}=m_{i}(m_{i}-1)/2$ being defined for every integer $m_{i}$,
\[
(1+A_{i})^{m_{i}}
=1+m_{i}A_{i}+\binom{m_{i}}{2}A_{i}^{2}
=1+2m_{i}x_{i}+\epsilon_{i}m_{i}(2m_{i}-1)u
\qquad (m_{i}\in\Z),
\]
the last equality by $m_{i}+4\binom{m_{i}}{2}=m_{i}(2m_{i}-1)$. Since
$x_{i}x_{j}=0$ for $i\neq j$ and $x_{i}u=0$, all cross terms in the product
vanish, and conditions~\eqref{eq:KOpontryagin} follows.

Comparing conditions~\eqref{eq:KOpontryagin} with $p_{1}(\zeta)=2\sum_{i}a_{i}x_{i}$ and
$p_{2}(\zeta)=b\,u$ shows that $m_{i}=a_{i}$ and
$6n=b-\sum_{i}\epsilon_{i}a_{i}(2a_{i}-1)$, so that $\zeta$ is determined by the
tuple $(a_{1},\dots,a_{k};b)$; this is the asserted injectivity. If moreover the tuple arises from a real vector bundle $\xi'$ over $M_{\ell,m}$, then
it is the tuple attached to $\zeta=[\xi']-\operatorname{rk}\xi'$,
and equation~\eqref{eq:realizable} follows since $n\in\Z$.

Conversely, assume the condition~\eqref{eq:realizable} holds. Then
\[
n=\tfrac{1}{6}\Bigl(b-\sum_{i=1}^{k}\epsilon_{i}a_{i}(2a_{i}-1)\Bigr)
\]
is an integer, so that $\zeta=\sum_{i=1}^{k}a_{i}\rho_{i}+n\sigma$ is a
well-defined element of $\widetilde{KO}(M_{\ell,m})$; by equations~\eqref{eq:KOpontryagin}
it satisfies $p_{1}(\zeta)=2\sum_{i=1}^{k}a_{i}x_{i}$ and $p_{2}(\zeta)=b\,u$. As
$M_{\ell,m}$ is compact, $\zeta=[\xi']-\operatorname{rk}\xi'$ for some real vector
bundle $\xi'$ over $M_{\ell,m}$; since $p_{1}(\xi')=p_{1}(\zeta)$ and
$p_{2}(\xi')=p_{2}(\zeta)$, the tuple $(a_{1},\dots,a_{k};b)$ arises from $\xi'$.
\end{proof}
A real vector bundle over $M_{\ell,m}$ admits a stable complex structure exactly
when its stable class lies in the image of the realification map
$\widetilde{r}:\widetilde{K}(M_{\ell,m})\to\widetilde{KO}(M_{\ell,m})$; it
therefore remains to determine that image in terms of the tuple
$(a_{1},\dots,a_{k};b)$.

\begin{lemma}\label{lem:realification}
Let $\omega=\sum_{i=1}^{k}d_{i}\eta_{i}+n'\theta$ be an element of
$\widetilde{K}(M_{\ell,m})$, where $\{\eta_{1},\dots,\eta_{k},\theta\}$ is the
basis of $\widetilde{K}(M_{\ell,m})$ given by Lemma~\ref{lem:Ktheory}. Then the
tuple attached to $\widetilde{r}(\omega)$ in Lemma~\ref{lem:KOtheory} is given by
\begin{equation}\label{eq:image_of_r}
a_{i}=d_{i},
\qquad
b=\sum_{i=1}^{k}\epsilon_{i}d_{i}(2d_{i}-1)-12n' .
\end{equation}
\end{lemma}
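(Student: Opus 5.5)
The plan is to compute the Pontryagin classes of $\widetilde{r}(\omega)$ through complexification, and then read off the tuple via the injectivity statement of Lemma~\ref{lem:KOtheory}. Writing $c:\widetilde{KO}\to\widetilde{K}$ for complexification, one has $c\,\widetilde{r}(\omega)=\omega+\bar\omega$. By definition $p_{j}(\widetilde{r}\omega)=(-1)^{j}c_{2j}(\omega+\bar\omega)$; this is an honest identity for virtual classes, because $H^{*}(M_{\ell,m};\Z)$ is torsion-free, so total Chern and Pontryagin classes are multiplicative.

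\textbf{Step 1: $\bar\omega=\omega$.} Conjugation multiplies $\operatorname{ch}_{j}$ by $(-1)^{j}$. By Lemma~\ref{lem:Ktheory}, $\operatorname{ch}(\omega)$ lies in $H^{4}\oplus H^{8}$, that is, in $\operatorname{ch}_{2}$ and $\operatorname{ch}_{4}$ only. Hence $\operatorname{ch}(\bar\omega)=\operatorname{ch}(\omega)$. Since $\widetilde{K}(M_{\ell,m})$ is free and the Chern characters of the basis in Lemma~\ref{lem:Ktheory} are linearly independent over $\Q$, $\operatorname{ch}$ is injective, so $\bar\omega=\omega$. Therefore $c\,\widetilde{r}(\omega)=2\omega$ and $c(c\,\widetilde{r}\omega)=c(\omega)^{2}$.

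\textbf{Step 2: Chern classes of $\omega$ from its Chern character.} Since $H^{2}=H^{6}=0$, we have $c_{1}(\omega)=c_{3}(\omega)=0$. Newton's identities for a virtual class of rank $0$ then give
\[
\operatorname{ch}_{2}(\omega)=-c_{2}(\omega),\qquad 24\operatorname{ch}_{4}(\omega)=2c_{2}(\omega)^{2}-4c_{4}(\omega).
\]
From Lemma~\ref{lem:Ktheory}, $\operatorname{ch}(\omega)=\sum_{i}d_{i}x_{i}+\bigl(\tfrac{1}{12}\sum_{i}\epsilon_{i}d_{i}+n'\bigr)u$. Using the relations~\eqref{eq:ring}, this yields
\[
c_{2}(\omega)=-\sum_{i}d_{i}x_{i},\qquad c_{2}(\omega)^{2}=\Bigl(\sum_{i}\epsilon_{i}d_{i}^{2}\Bigr)u,
\]
and
\[
c_{4}(\omega)=\tfrac{1}{2}\Bigl(\sum_{i}\epsilon_{i}d_{i}^{2}-\sum_{i}\epsilon_{i}d_{i}-12n'\Bigr)u .
\]

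\textbf{Step 3: square and read off.} From $(1+c_{2}+c_{4})^{2}=1+2c_{2}+(c_{2}^{2}+2c_{4})$ we get
\[
p_{1}(\widetilde{r}\omega)=-2c_{2}(\omega)=2\sum_{i}d_{i}x_{i},
\]
and
\[
p_{2}(\widetilde{r}\omega)=c_{2}^{2}+2c_{4}=\Bigl(2\sum_{i}\epsilon_{i}d_{i}^{2}-\sum_{i}\epsilon_{i}d_{i}-12n'\Bigr)u .
\]
This is exactly~\eqref{eq:image_of_r}, namely $a_{i}=d_{i}$ and $b=\sum_{i}\epsilon_{i}d_{i}(2d_{i}-1)-12n'$.

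The main point needing care is Step 2. One must check that the Newton identities apply verbatim to virtual bundles of rank zero, and keep track of the factor $\tfrac{1}{12}$ coming from $\operatorname{ch}(\eta_{i})$. The integrality of $c_{4}(\omega)$ is automatic, since $\sum_{i}\epsilon_{i}d_{i}(d_{i}-1)$ is even, and it serves as a consistency check. An alternative route would be to verify the formula on the basis elements $\eta_{i}$ (realifying to $\rho_{i}$, since $\eta_{i}$ is pulled back from $\gamma$) and on $\theta$, and then to use additivity in $\widetilde{KO}$ through Lemma~\ref{lem:KOtheory}. That route, however, requires separately identifying $\widetilde{r}(\theta)=-2\sigma$, so the complexification computation above is the more direct one.
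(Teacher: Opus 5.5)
Your proof is correct and follows essentially the same route as the paper: both compute $c_{2}(\omega)$ and $c_{4}(\omega)$ from $\operatorname{ch}(\omega)$ using $c_{1}=c_{3}=0$, and then read off $p_{1}$ and $p_{2}$ from $c(\omega)\,c(\overline{\omega})=c(\omega)^{2}$. The only difference is how you get $c(\overline{\omega})=c(\omega)$: you prove $\overline{\omega}=\omega$ via injectivity of the Chern character, whereas the paper simply notes that $c_{j}(\overline{\omega})=(-1)^{j}c_{j}(\omega)$ and that the odd Chern classes vanish.
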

\begin{proof}
Since $H^{2}(M_{\ell,m};\Z)=H^{6}(M_{\ell,m};\Z)=0$ we have
$c_{1}(\omega)=c_{3}(\omega)=0$, so that
\[
\operatorname{ch}_{2}(\omega)=-c_{2}(\omega),
\qquad
\operatorname{ch}_{4}(\omega)=\tfrac{1}{12}\bigl(c_{2}(\omega)^{2}-2c_{4}(\omega)\bigr),
\]
while Lemma~\ref{lem:Ktheory} yields
\[
\operatorname{ch}(\omega)=\sum_{i=1}^{k}d_{i}x_{i}
+\Bigl(\tfrac{1}{12}\sum_{i=1}^{k}\epsilon_{i}d_{i}+n'\Bigr)u .
\]
Comparing the two expressions for $\operatorname{ch}_{2}(\omega)$ gives
$c_{2}(\omega)=-\sum_{i=1}^{k}d_{i}x_{i}$, whence
$c_{2}(\omega)^{2}=\bigl(\sum_{i=1}^{k}\epsilon_{i}d_{i}^{2}\bigr)u$. Writing
$c_{4}(\omega)=\delta u$ with $\delta\in\Z$ and comparing the two expressions
for $\operatorname{ch}_{4}(\omega)$ then leads to
\[
\sum_{i=1}^{k}\epsilon_{i}d_{i}^{2}-2\delta
=\sum_{i=1}^{k}\epsilon_{i}d_{i}+12n' ,
\qquad\text{that is,}\qquad
\delta=\sum_{i=1}^{k}\epsilon_{i}\binom{d_{i}}{2}-6n' .
\]
One has the identity
$\sum_{r}(-1)^{r}p_{r}\bigl(\widetilde{r}(\omega)\bigr)=c(\omega)\,c(\overline{\omega})$~\cite[Ch.~15]{characteristic}, where $\overline{\omega}$ denotes the conjugate of $\omega$.
Since the odd Chern classes vanish, we have $c(\omega)=c(\overline{\omega})$. This yields
$p_{1}(\widetilde{r}(\omega))=-2c_{2}(\omega)$ and
$p_{2}(\widetilde{r}(\omega))=c_{2}(\omega)^{2}+2c_{4}(\omega)$. Hence
\[
p_{1}\bigl(\widetilde{r}(\omega)\bigr)=2\sum_{i=1}^{k}d_{i}x_{i},
\qquad
p_{2}\bigl(\widetilde{r}(\omega)\bigr)
=\Bigl(\sum_{i=1}^{k}\epsilon_{i}d_{i}^{2}+2\delta\Bigr)u
=\Bigl(\sum_{i=1}^{k}\epsilon_{i}d_{i}(2d_{i}-1)-12n'\Bigr)u .
\]
Comparing these expressions with equation~\eqref{eq:invariants} yields the values of
$a_{i}$ and $b$ asserted in equation~\eqref{eq:image_of_r}. This completes the proof.
\end{proof}
Lemmas~\ref{lem:Ktheory},~\ref{lem:KOtheory} and~\ref{lem:realification} now combine
to yield Theorem~\ref{maintheorem}.

\begin{proof}[\textbf{Proof of Theorem~\ref{maintheorem}}]
Since $\{\rho_{1},\dots,\rho_{k},\sigma\}$ is a basis of
$\widetilde{KO}(M_{\ell,m})$, the stable class
$\widetilde{\xi}=[\xi]-\operatorname{rk}\xi$ can be written uniquely as
$\widetilde{\xi}=\sum_{i=1}^{k}m_{i}\rho_{i}+n\sigma$ with $m_{i},n\in\Z$, and
Lemma~\ref{lem:KOtheory} yields $m_{i}=a_{i}$ as well as
\begin{equation}\label{eq:bn}
6n=b-\sum_{i=1}^{k}\epsilon_{i}a_{i}(2a_{i}-1) .
\end{equation}

By Lemma~\ref{lem:Ktheory}, every element of $\widetilde{K}(M_{\ell,m})$ has the
form $\omega=\sum_{i=1}^{k}d_{i}\eta_{i}+n'\theta$ with $d_{i},n'\in\Z$, so $\xi$
admits a stable complex structure if and only if
$\widetilde{\xi}=\widetilde{r}(\omega)$ for some such $\omega$. A class in
$\widetilde{KO}(M_{\ell,m})$ is determined by the tuple attached to it as in
Lemma~\ref{lem:KOtheory}; hence  the equality $\widetilde{\xi}=\widetilde{r}(\omega)$ holds if and only if $\widetilde{r}(\omega)$
and $\widetilde{\xi}$ have the same tuple, which by equation~\eqref{eq:image_of_r} amounts
to
\[
d_{i}=a_{i}\quad(1\le i\le k),
\qquad
12n'=\sum_{i=1}^{k}\epsilon_{i}a_{i}(2a_{i}-1)-b .
\]
In view of relation~\eqref{eq:bn} the second condition reads $6n=-12n'$, and such an
$n'\in\Z$ exists precisely when $n$ is even.

Finally, the congruence~\eqref{maincong} asserts that $4$ divides
$b-\sum_{i=1}^{k}\epsilon_{i}a_{i}(2a_{i}-1)$, which equals $6n$ by
relation~\eqref{eq:bn}; this happens precisely when $n$ is even. Hence, the congruence~\eqref{maincong} holds if and only if $\xi$ admits a stable complex structure.
\end{proof}

We now apply Theorem~\ref{maintheorem} to the tangent bundle of $M_{\ell,m}$, which yields Corollary~\ref{cor:stablyAC}.
\begin{proof}[\textbf{Proof of Corollary~\ref{cor:stablyAC}}]
For $\xi=TM_{\ell,m}$, Theorem~\ref{prop:p1ConnectedSum} yields $a_{i}=1$ for
every $i$ and $b=7(\ell-m)$. As $a_{i}(2a_{i}-1)=1$ and
$\sum_{i=1}^{k}\epsilon_{i}=\ell-m$, the congruence \eqref{maincong} reads
\[
7(\ell-m)\equiv\ell-m\pmod{4},
\]
that is, $6(\ell-m)\equiv 0\pmod{4}$, or equivalently
$3(\ell-m)\equiv 0\pmod{2}$. This holds precisely when
$\ell-m$ is even, and the assertion follows from Theorem~\ref{maintheorem}.
\end{proof}
\begin{remark}\label{rem:Yang2012stable}
Since $M_{\ell,m}$ is $3$-connected and of dimension $8$, the criterion of
Yang~\cite[Theorem~1, case~3]{Yang2012} for $(n-1)$-connected $2n$-manifolds
applies with $n=4$. By Lemma~\ref{lem:cohomology} the signature of $M_{\ell,m}$ is $\ell-m$, and
the criterion requires the integer
\[
\frac{B_{2}+B_{1}}{B_{2}B_{1}}\cdot\frac{\ell-m}{4}=9(\ell-m)
\]
to be even, where $B_{1}$ and $B_{2}$ are the Bernoulli numbers, with $B_{1}=1/6$
and $B_{2}=1/30$~\cite[Appendix~B]{characteristic}. This recovers Corollary~\ref{cor:stablyAC}.
\end{remark}
\section{Complex structures on real vector bundles over \texorpdfstring{$M_{\ell,m}$}{M\_\{l,m\}}}\label{sec:bundleacs}

Theorem~\ref{maintheorem} decides when a real vector bundle over $M_{\ell,m}$
admits a stable complex structure. In this section, we determine when such a
bundle admits a complex structure. Throughout, a complex structure on
an oriented real vector bundle is understood to induce the given orientation.

The two notions are related by the following criterion of Thomas.

\begin{theorem}[{\cite[Theorem~1.7]{Thomas}}]\label{thm:destabilize}
Let $X$ be a complex with $\dim X\leq 2n$ such that $H^{2n}(X;\Z)$ has no
$2$-torsion, and let $\xi$ be an oriented real vector bundle of rank $2n$ over $X$.
Then $\xi$ admits a complex structure if and only if $\xi$ admits a stable
complex structure $\omega$ with $c_{n}(\omega)=e(\xi)$.
\end{theorem}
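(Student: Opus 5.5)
Since the statement is Thomas's criterion, quoted from \cite{Thomas}, I would prove it by obstruction theory on a relative lifting problem. The necessity direction is immediate. If $J$ is a complex structure on $\xi$ inducing its orientation, then the complex bundle $(\xi,J)$ is a stable complex structure $\omega$ on $\xi$, and $c_{n}(\omega)=e(\xi)$ because the top Chern class of a rank-$n$ complex bundle is the Euler class of its underlying oriented real bundle. All the work lies in the converse.

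For sufficiency, I would form the homotopy pullback $P=BSO(2n)\times_{BSO}BU$ and the natural map $\Phi\colon BU(n)\to P$. A pair $(\xi,\omega)$, with $\omega$ a stable complex structure on $\xi$ (together with the chosen stable isomorphism), is exactly a map $X\to P$, and a complex structure on $\xi$ compatible with $\omega$ is a lift of that map through $\Phi$. The homotopy fibre $F$ of $\Phi$ is the homotopy fibre of $SO(2n)/U(n)\to SO/U$. Using the fibrations $SO(2n)/U(n)\to SO(2n+1)/U(n)\to\mathbb{S}^{2n}$ and the identifications $SO(2n+1)/U(n)\cong SO(2n+2)/U(n+1)$, the map $SO(2n)/U(n)\to SO/U$ is $(2n-1)$-connected. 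Its first nontrivial kernel is the image of $\pi_{2n}(\mathbb{S}^{2n})\cong\Z$. Hence $F$ is $(2n-2)$-connected with $\pi_{2n-1}(F)\cong\Z$, and I would check that this is a simple system, since $BSO(2n)$ is simply connected. Because $\dim X\le 2n$, the only obstruction to lifting is a single primary class $\mathfrak{o}(\xi,\omega)\in H^{2n}(X;\Z)$, natural in $X$. It is the pullback of a universal class $\mathfrak{o}\in H^{2n}(P;\Z)$ with $\Phi^{*}\mathfrak{o}=0$, and $\mathfrak{o}$ restricts to a generator on the fibre.

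The key step, and the main obstacle, is to identify $\mathfrak{o}$ with the class $c_{n}-e$ on $P$, where $c_{n}$ is pulled back from $BU$ and $e$ from $BSO(2n)$. This difference also vanishes under $\Phi^{*}$, so both classes lie in the kernel of $\Phi^{*}$ on $H^{2n}(P;\Z)$. By the Serre (or relative) spectral sequence of $F\to BU(n)\to P$ in the range up to $2n$, this kernel is cyclic, generated by the transgression of the fundamental class of $F$. I would therefore compute $c_{n}-e$ on the test case $X=\mathbb{S}^{2n}$: take $\xi$ trivial and let $\omega$ run through the generator of $\pi_{2n}(BU)$ coming from $\ker\bigl(\pi_{2n-1}(SO(2n)/U(n))\to\pi_{2n-1}(SO/U)\bigr)$. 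There I expect $c_{n}-e$ to be a fixed multiple of $\mathfrak{o}$. I anticipate ambiguity only up to a factor of $2$, from the normalisation $c_{n}(\mathbb{S}^{2n})=(n-1)!$ times a generator and from the Euler class of $T\mathbb{S}^{2n}$ being $2$. Concretely, I expect to show $2\,\mathfrak{o}=\pm2\,(c_{n}-e)$ universally, or $\mathfrak{o}=\pm(c_{n}-e)$ modulo classes of order $2$.

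Granting this, the hypothesis that $H^{2n}(X;\Z)$ has no $2$-torsion gives $\mathfrak{o}(\xi,\omega)=\pm\bigl(c_{n}(\omega)-e(\xi)\bigr)$. So if $c_{n}(\omega)=e(\xi)$, the obstruction vanishes and the lift exists, giving a rank-$n$ complex bundle whose underlying oriented real bundle is $\xi$. If the spectral-sequence identification of the kernel turns out to be awkward, I would fall back on a direct comparison over the $2n$-cells of $X$. There I would use the clutching description of $\pi_{2n-1}(SO(2n)/U(n))\to\pi_{2n-1}(SO/U)$ and compute $e$ and $c_{n}$ cell by cell via the Euler class of $T\mathbb{S}^{2n}$ and Bott's divisibility of $c_{n}$ on $\mathbb{S}^{2n}$.
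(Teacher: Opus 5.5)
The paper gives no proof of this statement; it is quoted from Thomas, so your argument has to stand on its own. Most of the framework is sound. $P$ is simply connected. The fibre $F$ of $\Phi$ is $(2n-2)$-connected with $\pi_{2n-1}(F)\cong\Z$; through this range $F\simeq\Omega\mathbb{S}^{2n}$. (Your intermediate claim about the kernel is imprecise: the image of $\pi_{2n}(\mathbb{S}^{2n})$ in $\pi_{2n-1}(SO(2n)/U(n))$ can be $0$, e.g.\ for $n=3$.) There is a single obstruction $\mathfrak{o}\in H^{2n}(P;\Z)$. The Serre exact sequence shows that $\ker\Phi^{*}$ in degree $2n$ is cyclic and generated by $\mathfrak{o}$, and $c_{n}-e$ lies in this kernel.

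\textbf{The gap is the key step: the multiple you expect is wrong, in the direction that matters.} Let $A\in\pi_{2n}(P)$ be the image of a generator of $\pi_{2n}(SO/SO(2n))\cong\Z$, that is, $\xi=T\mathbb{S}^{2n}$ and $\omega=0$ with the standard stable framing.
\begin{itemize}
\item The fibration $SO/SO(2n)\to P\to BU$ gives $\pi_{2n}(P)=\Z\{A\}\oplus\Z\{C\}$, where $C$ is the image under $\Phi_{*}$ of a generator of $\pi_{2n}(BU(n))\cong\pi_{2n}(BU)$.
\item The exact sequence $\pi_{2n}(BU(n))\to\pi_{2n}(P)\to\pi_{2n-1}(F)\to\pi_{2n-1}(BU(n))=0$ then shows that $\mathfrak{o}(A)$ generates $H^{2n}(\mathbb{S}^{2n};\Z)$.
\item On the other hand, $(c_{n}-e)(A)=-e(T\mathbb{S}^{2n})$ is twice a generator.
\end{itemize}
Hence, universally,
\[
c_{n}-e=\pm2\,\mathfrak{o},
\]
and not $\mathfrak{o}=\pm(c_{n}-e)$ modulo $2$-torsion. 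Your displayed conclusion $\mathfrak{o}(\xi,\omega)=\pm\bigl(c_{n}(\omega)-e(\xi)\bigr)$ already fails over $\mathbb{S}^{2n}$, where there is no torsion at all. The paper itself confirms the correct factor for $n=2$: by \eqref{eq:rank4c2} and Lemma~\ref{lem:chern}, $c_{2}(\omega)-e(\xi)=2c_{2}(S_{+})$, while the actual obstruction is $c_{2}(S_{+})$.

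The theorem still follows from the corrected identity. If $c_{n}(\omega)=e(\xi)$, then $2\,\mathfrak{o}(\xi,\omega)=0$, and the absence of $2$-torsion in $H^{2n}(X;\Z)$ forces $\mathfrak{o}(\xi,\omega)=0$. This is the real role of the hypothesis. It also shows the obstruction does not depend on the stable isomorphism built into the map $X\to P$.

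Your proposed test case also cannot detect the multiple when $n$ is even. Classes in $\pi_{2n}(P)$ with $\xi$ trivial come from $\pi_{2n}(SO/U)\cong\pi_{2n+1}(SO)$, which is finite for even $n$. Equivalently, realification is injective on $\widetilde{K}(\mathbb{S}^{2n})$, so $\omega$ must vanish. Both $\mathfrak{o}$ and $c_{n}-e$ therefore vanish on such classes, and $n=4$ is exactly the case the paper needs. The test class must have nontrivial $\xi$, as $A$ does.
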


Lemma~\ref{lem:chern} below shows that over $M_{\ell,m}$ the Chern classes of a
stable complex structure on $\xi$ are determined by the Pontryagin classes of
$\xi$; consequently the condition $c_{n}(\omega)=e(\xi)$ in
Theorem~\ref{thm:destabilize} does not depend on which stable complex structure
$\omega$ is taken.

\begin{lemma}\label{lem:chern}
Let $\xi$ be a real vector bundle over $M_{\ell,m}$, with associated integers
$a_{1},\dots,a_{k}$ and $b$ as in equation~\eqref{eq:invariants}, and suppose
that $\xi$ admits a stable complex structure $\omega$. Then
\[
c_{1}(\omega)=c_{3}(\omega)=0,
\qquad
c_{2}(\omega)=-\sum_{i=1}^{k}a_{i}x_{i},
\qquad
c_{4}(\omega)=\frac{1}{2}\Bigl(b-\sum_{i=1}^{k}\epsilon_{i}a_{i}^{2}\Bigr)u,
\]
and $c_{j}(\omega)=0$ for $j\geq 5$. In particular the Chern classes of $\omega$
do not depend on the choice of stable complex structure on $\xi$, and the
integer $b-\sum_{i=1}^{k}\epsilon_{i}a_{i}^{2}$ is even.
\end{lemma}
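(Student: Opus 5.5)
The plan is to read off the Chern classes of $\omega$ from the Pontryagin classes of its realification, in the same way as in the proof of Lemma~\ref{lem:realification}. Since $\omega$ is a stable complex structure on $\xi$, its realification $r(\omega)$ is stably isomorphic to $\xi$. Pontryagin classes are stable invariants, so $p_{j}(r(\omega))=p_{j}(\xi)$ for all $j$.

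The vanishing statements come from degree considerations alone. The groups $H^{2}(M_{\ell,m};\Z)$ and $H^{6}(M_{\ell,m};\Z)$ are zero by Lemma~\ref{lem:cohomology}, so $c_{1}(\omega)=c_{3}(\omega)=0$. Since $M_{\ell,m}$ has dimension $8$, we also have $c_{j}(\omega)=0$ for $j\geq 5$.

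The remaining classes come from the identity $\sum_{r}(-1)^{r}p_{r}(r(\omega))=c(\omega)\,c(\overline{\omega})$ of \cite[Ch.~15]{characteristic}. The class $c_{j}(\overline{\omega})$ equals $(-1)^{j}c_{j}(\omega)$, and the odd classes vanish, so $c(\overline{\omega})=c(\omega)=1+c_{2}+c_{4}$. Squaring gives $1+2c_{2}+(c_{2}^{2}+2c_{4})$. This holds exactly in integral cohomology, with no torsion issue, since $H^{*}(M_{\ell,m};\Z)$ is torsion-free. Comparing degree-$4$ parts gives $-p_{1}(\xi)=2c_{2}(\omega)$, hence $2\bigl(c_{2}(\omega)+\sum_{i}a_{i}x_{i}\bigr)=0$. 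Because $H^{4}$ is free, this yields $c_{2}(\omega)=-\sum_{i}a_{i}x_{i}$.

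Comparing degree-$8$ parts gives $p_{2}(\xi)=c_{2}^{2}+2c_{4}$. The relations~\eqref{eq:ring} give $c_{2}^{2}=\bigl(\sum_{i}\epsilon_{i}a_{i}^{2}\bigr)u$. Writing $c_{4}(\omega)=\delta u$, we get $b=\sum_{i}\epsilon_{i}a_{i}^{2}+2\delta$. This proves simultaneously that $b-\sum_{i}\epsilon_{i}a_{i}^{2}$ is even and that $\delta$ has the stated value.

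Since every formula is expressed through $a_{i}$ and $b$ alone, the Chern classes do not depend on the choice of $\omega$. The only points needing care are the sign convention $c_{j}(\overline{\omega})=(-1)^{j}c_{j}(\omega)$ and the division by $2$ in torsion-free groups. Neither is a genuine obstacle.
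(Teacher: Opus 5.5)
Your proposal is correct and follows essentially the same route as the paper's proof. Both obtain $c_{1}(\omega)=c_{3}(\omega)=0$ and $c_{j}(\omega)=0$ for $j\geq 5$ from degree considerations, use $c(\overline{\omega})=c(\omega)$, and compare the degree-$4$ and degree-$8$ parts of $1-p_{1}(\xi)+p_{2}(\xi)=c(\omega)^{2}$, relying on the torsion-freeness of $H^{4}(M_{\ell,m};\Z)$ and $H^{8}(M_{\ell,m};\Z)$.
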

\begin{proof}
Since $H^{2}(M_{\ell,m};\Z)=H^{6}(M_{\ell,m};\Z)=0$ we have
$c_{1}(\omega)=c_{3}(\omega)=0$, and $c_{j}(\omega)=0$ for $j\geq 5$ because
$H^{2j}(M_{\ell,m};\Z)=0$ in those degrees. Hence
$c(\omega)=1+c_{2}(\omega)+c_{4}(\omega)$, and $c(\overline{\omega})=c(\omega)$
as the odd Chern classes vanish.

For the underlying real vector bundle $\omega_{\R}$ one has
$\sum_{r}(-1)^{r}p_{r}(\omega_{\R})=c(\omega)\,c(\overline{\omega})=c(\omega)^{2}$,
while $p_{r}(\omega_{\R})=p_{r}(\xi)$, the two bundles being stably isomorphic.
Since $H^{j}(M_{\ell,m};\Z)=0$ for $j>8$, this reads
\begin{equation}\label{eq:chernpontryagin}
1-p_{1}(\xi)+p_{2}(\xi)
=\bigl(1+c_{2}(\omega)+c_{4}(\omega)\bigr)^{2}
=1+2c_{2}(\omega)+\bigl(c_{2}(\omega)^{2}+2c_{4}(\omega)\bigr).
\end{equation}
In degree $4$, equation~\eqref{eq:chernpontryagin} yields
$2c_{2}(\omega)=-p_{1}(\xi)=-2\sum_{i=1}^{k}a_{i}x_{i}$. As
$H^{4}(M_{\ell,m};\Z)$ is torsion-free, this yields
$c_{2}(\omega)=-\sum_{i=1}^{k}a_{i}x_{i}$, and therefore
\[
c_{2}(\omega)^{2}=\sum_{i=1}^{k}a_{i}^{2}x_{i}^{2}
=\Bigl(\sum_{i=1}^{k}\epsilon_{i}a_{i}^{2}\Bigr)u
\]
by Lemma~\ref{lem:cohomology}. In degree $8$ equation~\eqref{eq:chernpontryagin} yields
\[
2c_{4}(\omega)=p_{2}(\xi)-c_{2}(\omega)^{2}
=\Bigl(b-\sum_{i=1}^{k}\epsilon_{i}a_{i}^{2}\Bigr)u ,
\]
which determines $c_{4}(\omega)$ since $H^{8}(M_{\ell,m};\Z)\cong\Z$ is
torsion-free; and as $c_{4}(\omega)$ is integral,
$b-\sum_{i=1}^{k}\epsilon_{i}a_{i}^{2}$ is even.
\end{proof}

Before proving Theorem~\ref{thm:bundleacs} we establish the following lemma, which
yields the converse in case~\textup{(i)}.

\begin{lemma}\label{lem:su2}
Every $SU(2)$-bundle over $M_{\ell,m}$ with vanishing second Chern class is
trivial.
\end{lemma}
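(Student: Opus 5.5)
Since $SU(2)\cong\mathbb{S}^{3}$, the classifying space $BSU(2)$ is $3$-connected with $\pi_{4}(BSU(2))\cong\pi_{3}(SU(2))\cong\Z$. We therefore plan to use obstruction theory along the CW decomposition $M_{\ell,m}=\bigl(\bigvee_{k}\mathbb{S}^{4}\bigr)\cup_{\varphi}\mathbb{D}^{8}$ of Section~\ref{sec:prelim}. Let $P$ be an $SU(2)$-bundle with classifying map $f\colon M_{\ell,m}\to BSU(2)$ and $c_{2}(P)=0$. The only obstructions to null-homotopy lie in $H^{4}(M_{\ell,m};\pi_{4}BSU(2))$ and $H^{8}(M_{\ell,m};\pi_{8}BSU(2))$; there are no others, since $M_{\ell,m}$ has cells only in dimensions $0,4,8$.

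\emph{The $4$-skeleton.} The primary obstruction is $\pm c_{2}(P)$, because $c_{2}$ generates $H^{4}(BSU(2);\Z)$ and the Hurewicz map $\pi_{4}(BSU(2))\to H_{4}(BSU(2))$ is an isomorphism. Concretely, $f\circ\iota$ restricted to the $i$th sphere represents an element of $\pi_{4}(BSU(2))\cong\Z$ detected by $c_{2}$, and $\iota^{*}$ is an isomorphism on $H^{4}$. Since $c_{2}(P)=0$, the map $f\circ\iota$ is null-homotopic. By the homotopy extension property, $f$ is then homotopic to a map that is constant on $\bigvee_{k}\mathbb{S}^{4}$, and hence factors through the collapse $q\colon M_{\ell,m}\to\mathbb{S}^{8}$. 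So $f\simeq g\circ q$ for some $g\colon\mathbb{S}^{8}\to BSU(2)$, that is, for some class $[g]\in\pi_{8}(BSU(2))\cong\pi_{7}(\mathbb{S}^{3})\cong\Z_{2}$.

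\emph{The top cell.} This is the main obstacle: we must show that $q^{*}\colon\pi_{8}(BSU(2))\to[M_{\ell,m},BSU(2)]$ kills the nonzero element. By the Puppe sequence of the cofibration~\eqref{eq:cofibration}, the image of $q^{*}$ is the orbit space of $\pi_{8}(BSU(2))$ under the action of $[\Sigma\bigvee_{k}\mathbb{S}^{4},BSU(2)]=\bigoplus_{k}\pi_{5}(BSU(2))$ through $(\Sigma\varphi)^{*}$. The classes identified with $0$ include the image of the homomorphism
\[
\bigoplus_{i=1}^{k}\pi_{5}(BSU(2))\longrightarrow\pi_{8}(BSU(2)),\qquad (y_{i})\mapsto\sum_{i}\epsilon_{i}\,y_{i}\circ\Sigma\nu .
\]
Passing to $SU(2)=\mathbb{S}^{3}$, this is the composition $\pi_{4}(\mathbb{S}^{3})\to\pi_{7}(\mathbb{S}^{3})$, $\eta_{3}\mapsto\eta_{3}\circ\nu_{4}$. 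Toda's tables give $\eta_{3}\circ\nu_{4}=\nu'\circ\eta_{6}$, which generates $\pi_{7}(\mathbb{S}^{3})\cong\Z_{2}$. Granting this, the map is surjective, so $[g]$ becomes trivial in $[M_{\ell,m},BSU(2)]$ and $f$ is null-homotopic; this uses $k\geq 1$.

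We expect the identification $\eta_{3}\circ\nu_{4}\neq0$ to be the delicate step. If it fails or needs justification, a fallback is to detect the top cell by a $KO$- or $K$-theoretic invariant. The composite $\mathbb{S}^{8}\to BSU(2)\to BSU$ is detected, and under $[\,\cdot\,]\mapsto\widetilde{K}$ the element of $\pi_{8}BSU(2)$ maps to an even multiple of $\theta$. By Lemma~\ref{lem:Ktheory} and Lemma~\ref{lem:chern}, the vanishing of $c_{2}$ together with $c_{4}=0$ (automatic for rank $2$) forces the $K$-class to be zero. This alone does not decide the unstable $\Z_{2}$, however, so the Toda composition remains the primary route.
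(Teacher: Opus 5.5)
Your proposal is correct and follows essentially the same route as the paper. The condition $c_{2}=0$ makes $f\circ\iota$ null-homotopic, so $[f]$ lies in the image of $q^{*}$ on $\pi_{8}(BSU(2))\cong\pi_{7}(\mathbb{S}^{3})\cong\Z_{2}$, and that image is trivial because $(\Sigma\varphi)^{*}$ is surjective by Toda's relation $\eta_{3}\circ\nu_{4}=\nu'\circ\eta_{6}$, which generates $\pi_{7}(\mathbb{S}^{3})$. The $K$-theoretic fallback is unnecessary, and it could not work in any case: $\pi_{8}(BSU(2))$ is torsion and therefore maps to zero in $\pi_{8}(BU)\cong\Z$.
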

\begin{proof}
Isomorphism classes of $SU(2)$-bundles over a CW-complex $X$ are in bijection
with $[X,BSU(2)]$, the trivial bundle corresponding to the homotopy class of
the constant map, and for $X=\mathbb{S}^{4}$ the second Chern class defines a
bijection $[\mathbb{S}^{4},BSU(2)]\cong H^{4}(\mathbb{S}^{4};\Z)$
\cite[Theorem~E.5, p.~221]{FreedUhlenbeck}. Since homotopy classes of maps
from a wedge are determined by their restrictions to the wedge summands, a map
$\bigvee\limits_{k}\mathbb{S}^{4}\to BSU(2)$ classifying an $SU(2)$-bundle with
vanishing second Chern class is null-homotopic.

Let $\zeta$ be an $SU(2)$-bundle over $M_{\ell,m}$ with $c_{2}(\zeta)=0$ and
let $f:M_{\ell,m}\to BSU(2)$ be its classifying map. The composite
$f\circ\iota$ classifies $\iota^{*}\zeta$, and
$c_{2}(\iota^{*}\zeta)=\iota^{*}c_{2}(\zeta)=0$, so that $f\circ\iota$ is
null-homotopic. Applying $[-,BSU(2)]$ to the cofibre sequence~\eqref{eq:cofibration} yields an exact sequence of pointed sets
\[
\Bigl[\bigvee_{k}\mathbb{S}^{5},BSU(2)\Bigr]
\xrightarrow{\ (\Sigma\varphi)^{*}\ }\bigl[\mathbb{S}^{8},BSU(2)\bigr]
\xrightarrow{\ q^{*}\ }\bigl[M_{\ell,m},BSU(2)\bigr]
\xrightarrow{\ \iota^{*}\ }\Bigl[\bigvee_{k}\mathbb{S}^{4},BSU(2)\Bigr],
\]
whose first two terms are groups and in which $(\Sigma\varphi)^{*}$ is a
homomorphism,
the map $\Sigma\varphi$ being a suspension. As $f\circ\iota$ is null-homotopic,
the homotopy class of $f$ lies in the image of $q^{*}$. Now
$(\Sigma\varphi)^{*}$ is surjective, as we prove below, so that by exactness
the image of $q^{*}$ consists of the base point alone. Hence $f$ is
null-homotopic, and consequently $\zeta$ is trivial.

By the suspension-loop adjunction, together
with the homotopy equivalence $\Omega BSU(2)\simeq SU(2)=\mathbb{S}^{3}$, the
homomorphism
$(\Sigma\varphi)^{*}:\bigl[\bigvee\limits_{k}\mathbb{S}^{5},BSU(2)\bigr]
\to\bigl[\mathbb{S}^{8},BSU(2)\bigr]$
is identified with the homomorphism
\[
\varphi^{*}:\bigoplus_{i=1}^{k}\pi_{4}(\mathbb{S}^{3})
\longrightarrow\pi_{7}(\mathbb{S}^{3}),
\]
and it therefore suffices to prove that $\varphi^{*}$ is surjective.

The map $\varphi$ being the composite of the pinch map
$\mathbb{S}^{7}\to\bigvee\limits_{k}\mathbb{S}^{7}$ with $\nu_{1}\vee\dots\vee\nu_{k}$,
where $\nu_{i}=\epsilon_{i}\nu$ (Section~\ref{sec:prelim}), the restriction of
$\varphi^{*}$ to the $i$th summand is $\nu_{i}^{*}$. 
Since $\pi_{4}(\mathbb{S}^{3})\cong\Z_{2}$ is generated by $\Sigma\eta$, where $\eta\colon\mathbb{S}^{3}\rightarrow\mathbb{S}^{2}$ denotes the Hopf map, the image
of $\varphi^{*}$ is generated by $\nu_{i}^{*}(\Sigma\eta)=\Sigma\eta\circ\nu_{i}=\Sigma\eta\circ\nu$, the sign $\epsilon_{i}$ being
immaterial in $\pi_{7}(\mathbb{S}^{3})\cong\Z_{2}$. By \cite[(5.9)]{Toda} one has
$(\Sigma\eta)\circ\nu=\nu'\circ\Sigma^{4}\eta$, where $\nu'$ generates the $2$-primary component of 
$\pi_{6}(\mathbb{S}^{3})\cong\Z_{12}$, and this
composite generates $\pi_{7}(\mathbb{S}^{3})\cong\Z_{2}$
\cite[Proposition~5.8]{Toda}. Hence $\varphi^{*}$ is surjective, and therefore so is $(\Sigma\varphi)^{*}$.
\end{proof}
\begin{remark}\label{rem:lowranks}
For $k=1$ an analogous statement for $SO(4)$-bundles is due to Sasao and
Tamura \cite{SasaoTamura}, who show that an $SO(4)$-bundle over $\HP^{2}$ is trivial
precisely when its Pontryagin and Euler classes vanish. For an $SU(2)$-bundle $\zeta$
over $M_{\ell,m}$ one has $p_{1}(\zeta_{\R})=-2c_{2}(\zeta)$
and $e(\zeta_{\R})=c_{2}(\zeta)$, so that the hypothesis of Lemma~\ref{lem:su2} is the
vanishing of the Pontryagin and Euler classes of $\zeta_{\R}$; Lemma~\ref{lem:su2}
strengthens the conclusion to triviality as an $SU(2)$-bundle, and applies to
every $M_{\ell,m}$.
\end{remark}

Since $H^{1}(M_{\ell,m};\Z_{2})=H^{2}(M_{\ell,m};\Z_{2})=0$, every oriented real
vector bundle over $M_{\ell,m}$ is spin and carries a unique spin
structure \cite[Ch.~II]{LawsonMichelsohn}. We recall the half-spinor bundles of an oriented real vector bundle carrying a
spin structure.

Let $n\geq 1$ and let $\Delta$ be an irreducible module over the complexified Clifford
algebra of $\R^{2n}$. The group $Spin(2n)$ lies in the Clifford algebra of $\R^{2n}$,
and hence in its complexification; the module structure on $\Delta$ therefore
restricts to a complex representation of $Spin(2n)$. The complex volume
element $\omega_{\C}=i^{n}e_{1}\cdots e_{2n}$ of the complexified Clifford algebra
(here $\{e_{1},\dots,e_{2n}\}$ is a positively oriented orthonormal basis of $\R^{2n}$,
and the product is independent of the basis chosen) satisfies $\omega_{\C}^{2}=1$ and
commutes with $Spin(2n)$. The eigenspaces $\Delta^{+}$ and $\Delta^{-}$
of $\omega_{\C}$ for the eigenvalues $+1$ and $-1$ are therefore subrepresentations
of $\Delta$, and $\Delta=\Delta^{+}\oplus\Delta^{-}$; these are the half-spin
representations of $Spin(2n)$ \cite[Ch.~IV, \textsection9]{LawsonMichelsohn}. Given a spin
structure on an oriented real vector bundle of rank $2n$, the associated bundles with
fibres $\Delta^{+}$ and $\Delta^{-}$ are its half-spinor bundles $S_{+}$ and $S_{-}$.

We use the following lemma in the proof of cases~\textup{(i)} and~\textup{(ii)} of
Theorem~\ref{thm:bundleacs}.
\begin{lemma}\label{lem:linesubbundle}
Let $M$ be a closed smooth manifold whose cohomology groups $H^{1}(M;\Z_{2})$,
$H^{2}(M;\Z_{2})$ and $H^{2}(M;\Z)$ vanish, let $n=2$ or $n=3$, and let $\xi$ be an
oriented real vector bundle of rank $2n$ over $M$. Then $\xi$ is spin and carries a
unique spin structure, and $\xi$ admits a complex structure if and only if its
positive half-spinor bundle $S_{+}$ admits a nowhere-vanishing section.
\end{lemma}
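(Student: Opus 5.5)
The plan splits into two parts: the spin structure, and then the equivalence between complex structures and sections of $S_{+}$. For existence, since $H^{1}(M;\Z_{2})=0$ we have $w_{1}(\xi)=0$, which is automatic for an oriented bundle. Since $H^{2}(M;\Z_{2})=0$ we have $w_{2}(\xi)=0$, so $\xi$ is spin. Spin structures on $\xi$ form a torsor over $H^{1}(M;\Z_{2})=0$, so the spin structure is unique \cite[Ch.~II]{LawsonMichelsohn}. Consequently $S_{+}$ is well defined by $\xi$ alone.

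For the equivalence I would reduce to a statement about structure groups. A complex structure on $\xi$ inducing its orientation is a reduction of the $SO(2n)$-bundle to $U(n)$. Over a space with $H^{1}(M;\Z_{2})=0$ and the given spin structure, this is the same as a reduction of the $Spin(2n)$-bundle to the preimage $\widetilde{U}(n)\subset Spin(2n)$ of $U(n)$. The key representation-theoretic input is the following. For $n=2$ we have $Spin(4)\cong SU(2)\times SU(2)$ with $\Delta^{+}\cong\C^{2}$ the first factor, and $SO(4)/U(2)\cong\mathbb{S}^{2}$. For $n=3$ we have $Spin(6)\cong SU(4)$ with $\Delta^{+}\cong\C^{4}$ the standard representation, and $SO(6)/U(3)\cong\C P^{3}$. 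In both cases $Spin(2n)$ acts transitively on the projective space $P(\Delta^{+})$. The stabiliser of a line is exactly $\widetilde{U}(n)$, namely $SU(2)\times U(1)$ resp.\ $S(U(1)\times U(3))\cong U(3)$, up to the appropriate double cover. The identification comes from the pure-spinor description: the line in $\Delta^{+}$ annihilated by the $(0,1)$-vectors of an orthogonal complex structure $J$ with the given orientation. Thus the bundle of orientation-compatible complex structures on the fibres of $\xi$ is $SO(2n)/U(n)$-bundle $\cong P(S_{+})$. Sections of $P(S_{+})$ correspond to complex structures on $\xi$, i.e.\ to complex line subbundles $L\subset S_{+}$.

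Next I would use the cohomology hypothesis. Given a line subbundle $L\subset S_{+}$, we have $c_{1}(L)\in H^{2}(M;\Z)=0$, so $L$ is trivial and has a nowhere-vanishing section, which is then a nowhere-vanishing section of $S_{+}$. Conversely, a nowhere-vanishing section $s$ of $S_{+}$ spans a line subbundle. Hence $\xi$ admits a complex structure iff $P(S_{+})$ has a section iff $S_{+}$ has a nowhere-vanishing section.

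I expect the main obstacle to be the identification of the stabiliser of a line in $\Delta^{+}$ with the lift of $U(n)$, together with the orientation. One has to check that the pure spinors of $J$ lie in $\Delta^{+}$, rather than $\Delta^{-}$, precisely when $J$ induces the given orientation, using the sign of $\omega_{\C}=i^{n}e_{1}\cdots e_{2n}$ on $\Lambda^{0,0}$. I would verify this directly in a basis $e_{1},Je_{1},\dots$ by computing $\omega_{\C}$ on the vacuum vector. A dimension count ($\dim SO(4)/U(2)=2=\dim\C P^{1}$, $\dim SO(6)/U(3)=6=\dim\C P^{3}$) together with transitivity then finishes the identification of homogeneous spaces.
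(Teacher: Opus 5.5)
Your proposal is correct and follows essentially the same route as the paper. You identify the bundle of orientation-compatible orthogonal complex structures with $\mathbb{P}(S_{+})$ via pure spinors, then use $H^{2}(M;\Z)=0$ to trade line subbundles for nowhere-vanishing sections; the only difference is that you verify the fibrewise identification $SO(2n)/U(n)\cong\mathbb{P}(\Delta^{+})$ by hand, whereas the paper simply cites Lawson--Michelsohn's pure-spinor correspondence together with the fact that every nonzero element of $\Delta^{+}$ is pure when $2n\le 6$. Your by-hand check consists of transitivity and stabiliser computations through $Spin(4)\cong SU(2)\times SU(2)$ and $Spin(6)\cong SU(4)$, plus an explicit orientation check on the vacuum spinor.
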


\begin{proof}
Since $\xi$ is oriented, $w_{1}(\xi)=0$, and $w_{2}(\xi)=0$
because $H^{2}(M;\Z_{2})=0$; thus $\xi$ is spin, and its spin structure is unique
because $H^{1}(M;\Z_{2})=0$ \cite[Ch.~II]{LawsonMichelsohn}. Fix a metric on $\xi$,
let $P$ be the bundle of oriented orthonormal frames of $\xi$ with respect to that
metric, which is a principal $SO(2n)$-bundle, and let $Q$ be the spin structure
of $\xi$, a principal $Spin(2n)$-bundle covering $P$.

Every complex structure on $\xi$ determines an orthogonal complex structure with
respect to a chosen metric $\langle\,\cdot\,,\cdot\,\rangle$, that is, an orthogonal endomorphism of $\xi$ squaring
to $-\mathrm{id}$ and inducing the given orientation. Indeed, a complex structure $J$ on $\xi$ is orthogonal for the metric $\langle\,\cdot\,,\cdot\,\rangle+\langle J(\cdot),J(\cdot)\rangle$. The space of metrics on $\xi$ is convex, so this new metric is joined to the chosen metric by a smooth path of metrics. As $M$ is compact, the homotopy invariance of fibre bundles implies that the bundles of orthogonal complex structures for the metrics at the two ends of this path are isomorphic. Therefore, $J$ yields an orthogonal complex structure for the chosen metric. Conversely an
orthogonal complex structure on $\xi$ makes it into a complex vector bundle of
rank $n$ whose underlying oriented real vector bundle is $\xi$. Thus $\xi$ admits a
complex structure if and only if it admits an orthogonal one.

Let $\mathscr{C}$ be the space of orthogonal complex structures on $\R^{2n}$ inducing
the standard orientation, on which $SO(2n)$ acts by conjugation. Write $P\times_{SO(2n)}\mathscr{C}$ for the twistor bundle of $\xi$, the bundle
over $M$ associated with $P$ and the action of $SO(2n)$ on $\mathscr{C}$. A frame $p\in P$ over $x$
carries $\mathscr{C}$ to the set of orthogonal complex structures on $\xi_{x}$
inducing the given orientation, compatibly with that action; the sections of
$P\times_{SO(2n)}\mathscr{C}$ are therefore precisely the orthogonal complex
structures on $\xi$.

Extend each $J\in\mathscr{C}$ complex linearly to $\C^{2n}=\R^{2n}\otimes\C$ and
let $V(J)=\{v\in\C^{2n}\colon Jv=-iv\}$, a subspace isotropic for the complex bilinear
extension of the inner product. Write $\mathscr{P}$ for the projectivisation of the space of positive pure spinors
in $\Delta^{+}$. By \cite[Ch.~IV, Proposition~9.7]{LawsonMichelsohn} the assignment carrying an element
of $\mathscr{P}$ to the maximal isotropic subspace $V$ annihilating it under Clifford
multiplication, and $V$ in turn to the unique $J\in\mathscr{C}$ with $V=V(J)$, is
an $SO(2n)$-equivariant diffeomorphism from $\mathscr{P}$ onto $\mathscr{C}$. 

As $2n\leq 6$, every non-zero element of $\Delta^{+}$ is pure \cite[Ch.~IV,
Remark~9.12]{LawsonMichelsohn}, so that $\mathscr{P}$ is the whole
projectivisation $\mathbb{P}(\Delta^{+})$ of $\Delta^{+}$. 
The covering homomorphism $Spin(2n)\to SO(2n)$ has kernel $\{\pm 1\}$, and $-1$ acts
on $\Delta^{+}$ as $-\mathrm{id}$, hence trivially on $\mathbb{P}(\Delta^{+})$; the
action of $Spin(2n)$ on $\mathbb{P}(\Delta^{+})$ thus factors through $SO(2n)$. Since $\mathscr{C}$ and $\mathbb{P}(\Delta^{+})$ are isomorphic $SO(2n)$-spaces
and $Q/\{\pm 1\}=P$, we obtain
\[
P\times_{SO(2n)}\mathscr{C}
\;\cong\;P\times_{SO(2n)}\mathbb{P}(\Delta^{+})
\;\cong\;Q\times_{Spin(2n)}\mathbb{P}(\Delta^{+})
\;\cong\;\mathbb{P}(S_{+}),
\]
where $\mathbb{P}(S_{+})$ denotes the fibrewise projectivisation of $S_{+}$. The
sections of $\mathbb{P}(S_{+})$ are precisely the complex line subbundles of $S_{+}$. Thus $\xi$ admits a complex structure if and only if $S_{+}$ has a complex
line subbundle.

Finally $H^{2}(M;\Z)=0$, so that every complex line bundle over $M$ is trivial. A
complex line subbundle of $S_{+}$ is thus spanned by a nowhere-vanishing section
of $S_{+}$, and conversely such a section spans one, completing the proof.
\end{proof}

We now prove Theorem~\ref{thm:bundleacs}. Cases~\textup{(i)} and~\textup{(ii)} are
deduced from Lemma~\ref{lem:linesubbundle}, case~\textup{(iii)} from
Theorem~\ref{thm:destabilize}, and case~\textup{(iv)} from the connectivity of the
maps of classifying spaces induced by the stabilisation
maps $U(n)\hookrightarrow U$ and $SO(2n)\hookrightarrow SO$.

\begin{proof}[\textbf{Proof of Theorem~\ref{thm:bundleacs}}]
Since $H^{1}(M_{\ell,m};\Z_{2})=H^{2}(M_{\ell,m};\Z_{2})=H^{2}(M_{\ell,m};\Z)=0$,
Lemma~\ref{lem:linesubbundle} applies to every oriented real vector bundle of
rank $4$ or $6$ over $M_{\ell,m}$; such a bundle is in particular spin, with a
unique spin structure, as is used in cases~\textup{(i)} and~\textup{(ii)}.

\smallskip
\noindent\textit{\textbf{Case \textup{(i)}: $2n=4$.}} Write $S_{+}$ and $S_{-}$
for the half-spinor bundles of $\xi$. Under the exceptional isomorphism
$Spin(4)\cong SU(2)\times SU(2)$~\cite[Ch.~I, Theorem~8.1]{LawsonMichelsohn}, the
half-spin representation $\Delta^{+}$ becomes the standard representation of one
$SU(2)$-factor and $\Delta^{-}$ that of the other, each factor acting trivially on
the representation of the other, so that $S_{+}$ and $S_{-}$ are $SU(2)$-bundles.

The vector representation of $Spin(4)$ on $\R^{4}$ complexifies to
$\Delta^{+}\otimes\Delta^{-}$, so that $\xi\otimes\C\cong S_{+}\otimes S_{-}$.
Write $\pm s$ and $\pm t$ for the Chern roots of $S_{+}$ and $S_{-}$ respectively, the
first Chern classes of both vanishing, so that $c_{2}(S_{+})=-s^{2}$
and $c_{2}(S_{-})=-t^{2}$. The Chern roots of $S_{+}\otimes S_{-}$ are $\pm(s+t)$
and $\pm(s-t)$, whence $c(\xi\otimes\C)=\bigl(1-(s+t)^{2}\bigr)\bigl(1-(s-t)^{2}\bigr)$,
and the relation $p_{r}(\xi)=(-1)^{r}c_{2r}(\xi\otimes\C)$ yields
\[
p_{1}(\xi)=-2\bigl(c_{2}(S_{+})+c_{2}(S_{-})\bigr),
\qquad
p_{2}(\xi)=\bigl(c_{2}(S_{+})-c_{2}(S_{-})\bigr)^{2}.
\]

One has $H^{*}\bigl(BSU(2);\Z\bigr)\cong\Z[c_{2}]$~\cite{hatcher}, where $c_{2}$
denotes the second Chern class of the universal $SU(2)$-bundle; the exceptional
isomorphism $Spin(4)\cong SU(2)\times SU(2)$ then yields
$H^{*}\bigl(BSpin(4);\Z\bigr)\cong\Z\bigl[c_{2}(\Delta^{+}),c_{2}(\Delta^{-})\bigr]$.
The Euler class of the universal oriented vector bundle of rank $4$
over $BSpin(4)$, associated with the vector representation, squares to its second
Pontryagin class \cite[Ch.~15]{characteristic}, which equals
$\bigl(c_{2}(\Delta^{+})-c_{2}(\Delta^{-})\bigr)^{2}$ by the computation above,
applied over $BSpin(4)$; the ring $\Z\bigl[c_{2}(\Delta^{+}),c_{2}(\Delta^{-})\bigr]$
being an integral domain, this Euler class
equals $\pm\bigl(c_{2}(\Delta^{+})-c_{2}(\Delta^{-})\bigr)$. Since $\xi$ is spin, it
is induced, together with $S_{+}$ and $S_{-}$, from the universal bundles
over $BSpin(4)$ by a classifying map; hence
\begin{equation}\label{eq:eulerhalfspin}
e(\xi)=\pm\bigl(c_{2}(S_{+})-c_{2}(S_{-})\bigr),
\end{equation}
the sign being the same for every oriented real vector bundle of rank $4$ over any of
the manifolds $M_{\ell,m}$.

The sign in the equation~\eqref{eq:eulerhalfspin} is determined by the bundle $\gamma_{\R}$
over $\HP^{2}=M_{1,0}$. Being the underlying real vector bundle of $\gamma$, it admits
a complex structure, so that by Lemma~\ref{lem:linesubbundle} its positive half-spinor
bundle $S_{+}$ has a nowhere-vanishing section; as $c_{2}(S_{+})$ is the Euler class of
the underlying oriented real vector bundle of $S_{+}$, it follows
that $c_{2}(S_{+})=0$. Together with
$c_{2}(S_{+})+c_{2}(S_{-})=-\tfrac{1}{2}\,p_{1}(\gamma_{\R})$,
equation \eqref{eq:gammaclasses} yields $c_{2}(S_{-})=-\alpha=e(\gamma_{\R})$;
as $\alpha\neq 0$, the sign in equation~\eqref{eq:eulerhalfspin} is negative. Therefore for the
bundle $\xi$, equation~\eqref{eq:eulerhalfspin} reads
$e(\xi)=c_{2}(S_{-})-c_{2}(S_{+})$; combining this with
$c_{2}(S_{+})+c_{2}(S_{-})=-\tfrac{1}{2}\,p_{1}(\xi)=-\sum_{i=1}^{k}a_{i}x_{i}$ yields
\begin{equation}\label{eq:rank4c2}
2c_{2}(S_{+})=-\sum_{i=1}^{k}a_{i}x_{i}-e(\xi).
\end{equation}

By Lemma~\ref{lem:linesubbundle} the bundle $\xi$ admits a complex structure if and
only if $S_{+}$ has a nowhere-vanishing section. If it does, then $c_{2}(S_{+})=0$,
this being the Euler class of the underlying oriented real vector bundle of $S_{+}$,
and equation~\eqref{eq:rank4c2} yields $e(\xi)=-\sum_{i=1}^{k}a_{i}x_{i}$. Conversely,
if $e(\xi)=-\sum_{i=1}^{k}a_{i}x_{i}$, then equation~\eqref{eq:rank4c2}
yields $2c_{2}(S_{+})=0$ and hence $c_{2}(S_{+})=0$, the
group $H^{4}(M_{\ell,m};\Z)$ being torsion-free; by Lemma~\ref{lem:su2} the
bundle $S_{+}$ is trivial and therefore has a nowhere-vanishing section. This proves
case~\textup{(i)}.

\smallskip
\noindent\textit{\textbf{Case \textup{(ii)}: $2n=6$.}} Under the exceptional
isomorphism $Spin(6)\cong SU(4)$~\cite[Ch.~I, Theorem~8.1]{LawsonMichelsohn}, the two
half-spin representations $\Delta^{\pm}$ of $Spin(6)$ become the standard
representation $\C^{4}$ of $SU(4)$ and its dual, while the vector representation
of $Spin(6)$ on $\R^{6}$ complexifies to $\Lambda^{2}\C^{4}$. The positive half-spinor
bundle $S_{+}$ of $\xi$ is therefore an $SU(4)$-bundle, in particular a complex vector
bundle over $M_{\ell,m}$ of rank $4$ with $c_{1}(S_{+})=0$, and
\[
\xi\otimes\C\;\cong\;\Lambda^{2}S_{+} .
\]
Moreover $c_{3}(S_{+})=0$, since $H^{6}(M_{\ell,m};\Z)=0$.

Let $y_{1},\dots,y_{4}$ be the Chern roots of $S_{+}$ and write $c_{j}=c_{j}(S_{+})$.
The Chern roots of $\Lambda^{2}S_{+}$ are the six sums $y_{i}+y_{j}$
with $1\le i<j\le 4$; as $y_{1}+\dots+y_{4}=0$, they occur in
pairs $\pm z_{1},\pm z_{2},\pm z_{3}$ with $z_{r}=y_{1}+y_{r+1}$, so that
$c(\Lambda^{2}S_{+})=\prod\limits_{r=1}^{3}(1-z_{r}^{2})$. Now
$z_{1}^{2},z_{2}^{2},z_{3}^{2}$ are the roots of
\begin{equation}\label{eq:cubic}
P(w)=w^{3}+2c_{2}w^{2}+(c_{2}^{2}-4c_{4})w-c_{3}^{2} .
\end{equation}
Indeed, put $w=(y_{1}+y_{2})^{2}$. Then $y_{1}y_{2}+y_{3}y_{4}=c_{2}+w$ and
$c_{3}=(y_{1}+y_{2})(y_{3}y_{4}-y_{1}y_{2})$, whence
$c_{3}^{2}=w\,(y_{3}y_{4}-y_{1}y_{2})^{2}$ and
\[
4c_{4}w
=4y_{1}y_{2}y_{3}y_{4}w=w\Bigl[(y_{1}y_{2}+y_{3}y_{4})^{2}-(y_{3}y_{4}-y_{1}y_{2})^{2}\Bigr]
=w(c_{2}+w)^{2}-c_{3}^{2},
\]
that is, $P(w)=0$; the same computation applies to $(y_{1}+y_{3})^{2}$
and $(y_{1}+y_{4})^{2}$.

Comparing coefficients in equation~\eqref{eq:cubic} yields $\sum_{r}z_{r}^{2}=-2c_{2}$
and $\sum_{r<s}z_{r}^{2}z_{s}^{2}=c_{2}^{2}-4c_{4}$, that is,
$c_{2}(\Lambda^{2}S_{+})=2c_{2}$ and $c_{4}(\Lambda^{2}S_{+})=c_{2}^{2}-4c_{4}$.
Since $p_{r}(\xi)=(-1)^{r}c_{2r}(\xi\otimes\C)$, we obtain
\begin{equation*}
p_{1}(\xi)=-2c_{2}(S_{+}),
\qquad
p_{2}(\xi)=c_{2}(S_{+})^{2}-4c_{4}(S_{+}).
\end{equation*}
Since $H^{4}(M_{\ell,m};\Z)$ is torsion-free, the first identity, combined
with $p_{1}(\xi)=2\sum_{i=1}^{k}a_{i}x_{i}$,
yields $c_{2}(S_{+})=-\sum_{i=1}^{k}a_{i}x_{i}$,
whence $c_{2}(S_{+})^{2}=\bigl(\sum_{i=1}^{k}\epsilon_{i}a_{i}^{2}\bigr)u$ by
Lemma~\ref{lem:cohomology}; the second identity, combined with $p_{2}(\xi)=b\,u$,
then yields
\begin{equation}\label{eq:rank6c4}
4c_{4}(S_{+})=\Bigl(\sum_{i=1}^{k}\epsilon_{i}a_{i}^{2}-b\Bigr)u .
\end{equation}

By Lemma~\ref{lem:linesubbundle} the bundle $\xi$ admits a complex structure if and
only if $S_{+}$ admits a nowhere-vanishing section. The obstructions to the existence
of such a section lie in $H^{q+1}\bigl(M_{\ell,m};\pi_{q}(\mathbb{S}^{7})\bigr)$
\cite[Ch.~12]{characteristic}, which vanishes for $q<7$, the
group $\pi_{q}(\mathbb{S}^{7})$ being trivial, and for $q>7$, the
manifold $M_{\ell,m}$ being $8$-dimensional. The only obstruction is therefore the
primary one, lying
in $H^{8}\bigl(M_{\ell,m};\pi_{7}(\mathbb{S}^{7})\bigr)\cong H^{8}(M_{\ell,m};\Z)$; it
is the Euler class of the underlying oriented real vector bundle of $S_{+}$, that
is, $c_{4}(S_{+})$. As $H^{8}(M_{\ell,m};\Z)\cong\Z$ is torsion-free,
equation~\eqref{eq:rank6c4} shows that $c_{4}(S_{+})$ vanishes if and only
if equation~\eqref{eq:rank6} holds. This proves case~\textup{(ii)}.

\smallskip
\noindent\textit{\textbf{Case \textup{({iii})}: $2n=8$.}} Here $\dim M_{\ell,m}=8=2n$
and $H^{8}(M_{\ell,m};\Z)\cong\Z$ has no $2$-torsion;
by Theorem~\ref{thm:destabilize} applied with $n=4$, the bundle $\xi$
admits a complex structure if and only if it admits a stable complex
structure $\omega$ with $c_{4}(\omega)=e(\xi)$. By Theorem~\ref{maintheorem} a stable complex structure
on $\xi$ exists if and only if the congruence~\eqref{maincong} holds, and by Lemma~\ref{lem:chern} every stable complex structure
$\omega$ on $\xi$ satisfies
$c_{4}(\omega)=\tfrac{1}{2}\bigl(b-\sum_{i=1}^{k}\epsilon_{i}a_{i}^{2}\bigr)u$.
Since $e(\xi)=e\,u$, the condition $c_{4}(\omega)=e(\xi)$ amounts
to equation~\eqref{eq:euler}. Hence $\xi$ admits a complex structure if and only if congruence~\eqref{maincong} and relation~\eqref{eq:euler} both hold. This proves case~\textup{({iii})}.

\smallskip
\noindent\textit{\textbf{Case \textup{({iv})}: $2n\geq 10$.}} A complex structure
on $\xi$ is in particular a stable complex structure, so that the congruence~\eqref{maincong} holds
by Theorem~\ref{maintheorem}. For the converse, assume the congruence~\eqref{maincong}
holds, so that by Theorem~\ref{maintheorem} the bundle $\xi$ admits a stable
complex structure $\omega$. The stabilisation map $U(n)\hookrightarrow U$ is
$2n$-connected~\cite{davis}, so that $BU(n)\to BU$ is
$(2n+1)$-connected. As $\dim M_{\ell,m}=8\leq 2n$, the induced map
\[
[M_{\ell,m},BU(n)]\longrightarrow[M_{\ell,m},BU]
\]
is bijective, so
there is a complex vector bundle $W$ of rank $n$ over $M_{\ell,m}$ whose stable class is
$\omega$. Then $W_{\R}$ and $\xi$ are stably isomorphic real vector bundles of rank $2n$, both oriented, $W_{\R}$ by the complex structure of  $W$. Choose an
isomorphism
$h:W_{\R}\oplus\varepsilon_{\mathbb{R}}^{s}\to\xi\oplus\varepsilon_{\mathbb{R}}^{s}$ for some
$s\geq 1$, the trivial summands carrying their standard orientations. As
$M_{\ell,m}$ is connected, $h$ either preserves or reverses orientation,
and in the second case $(\mathrm{id}_{\xi}\oplus\,r)\circ h$ preserves it,
where $r$ changes the sign of one coordinate of $\varepsilon_{\mathbb{R}}^{s}$. Hence
$W_{\R}$ and $\xi$ are stably isomorphic as oriented bundles. The stabilisation map $SO(2n)\hookrightarrow SO$ is
$(2n-1)$-connected~\cite{davis}, so that $BSO(2n)\to BSO$ is
$2n$-connected. Since $\dim M_{\ell,m}=8\leq 2n-1$, the induced map
\[
[M_{\ell,m},BSO(2n)]\longrightarrow[M_{\ell,m},BSO]
\]
is bijective, whence $W_{\R}\cong\xi$ as oriented bundles of rank $2n$. Thus $\xi$ admits a complex structure. This proves
case~\textup{({iv})}.
\end{proof}
\begin{remark}\label{rem:rank6realizability}
Let $\xi$ be an oriented real vector bundle of rank $6$ over $M_{\ell,m}$. A complex
structure on $\xi$ is in particular a stable one, so that the
congruence~\eqref{maincong} holds whenever the equation~\eqref{eq:rank6} does. As
conditions on arbitrary integer tuples, however, the equation~\eqref{eq:rank6} does
not imply the congruence~\eqref{maincong}. For $(\ell,m)=(1,0)$, so that
$M_{\ell,m}=\HP^{2}$, and for $a_{1}=2$ and $b=4$, the equation~\eqref{eq:rank6} holds
while the congruence~\eqref{maincong} fails. No contradiction arises, since not every
tuple $(a_{1},\dots,a_{k};b)$ is realised by a bundle of rank $6$.

Let $S_{+}$ be the positive half-spinor bundle of $\xi$, as in the proof of
Theorem~\ref{thm:bundleacs}\,\textup{(ii)}, and write
$[S_{+}]-4=\sum_{i=1}^{k}d_{i}\eta_{i}+n'\theta$ in the basis of
Lemma~\ref{lem:Ktheory}. Comparing Chern characters and using
$c_{1}(S_{+})=c_{3}(S_{+})=0$ yields $d_{i}=a_{i}$ and
$c_{4}(S_{+})=\bigl(\sum_{i=1}^{k}\epsilon_{i}\binom{a_{i}}{2}-6n'\bigr)u$; by
equation~\eqref{eq:rank6c4} this amounts to
\begin{equation}\label{eq:rank6realizable}
b=\sum_{i=1}^{k}\epsilon_{i}a_{i}(2-a_{i})+24n' .
\end{equation}

Conversely, every tuple $(a_{1},\dots,a_{k};b)$ with $b$ given by
the relation~\eqref{eq:rank6realizable} for some $n'\in\Z$ is realised by an oriented real vector
bundle of rank $6$ over $M_{\ell,m}$. Indeed, every oriented real vector bundle of
rank $6$ over $M_{\ell,m}$ is spin with a unique spin structure, so that the
assignment sending such a bundle to its positive half-spinor bundle is a bijection
from the set of isomorphism classes of oriented real vector bundles of rank $6$
over $M_{\ell,m}$ onto the set of isomorphism classes of $SU(4)$-bundles
over $M_{\ell,m}$; and since $H^{1}(M_{\ell,m};\Z)=H^{2}(M_{\ell,m};\Z)=0$, every
complex vector bundle of rank $4$ over $M_{\ell,m}$ admits a unique
$SU(4)$-structure, so that the latter set is in bijection with $[M_{\ell,m},BU(4)]$
and hence with $\widetilde{K}(M_{\ell,m})=[M_{\ell,m},BU]$, the map $BU(4)\to BU$
being $9$-connected and $M_{\ell,m}$ of dimension $8$. Given $a_{1},\dots,a_{k}$
and $n'$, the class $\sum_{i=1}^{k}a_{i}\eta_{i}+n'\theta$ therefore comes from such a
bundle, whose corresponding tuple is
$\bigl(a_{1},\dots,a_{k};\sum_{i=1}^{k}\epsilon_{i}a_{i}(2-a_{i})+24n'\bigr)$.

Taking $a_{1}=\dots=a_{k}=0$ and $n'=1$ in the equation~\eqref{eq:rank6realizable}, for instance,
yields an oriented real vector bundle of rank $6$ over $M_{\ell,m}$ with $b=24$. The
congruence \eqref{maincong} holds for it, since $24\equiv 0\pmod 4$, so that it admits
a stable complex structure; the equation \eqref{eq:rank6} fails, however, since
$\sum_{i=1}^{k}\epsilon_{i}a_{i}^{2}=0\neq 24$, so that it admits no complex structure.
The two criteria are therefore distinct.

Suppose now that the equation~\eqref{eq:rank6} holds. Comparing it with the equation~\eqref{eq:rank6realizable} yields 
\[
\sum_{i=1}^{k}\epsilon_{i}a_{i}(a_{i}-1)=12n',\]
and therefore
\[
b-\sum_{i=1}^{k}\epsilon_{i}a_{i}(2a_{i}-1)
=-\sum_{i=1}^{k}\epsilon_{i}a_{i}(a_{i}-1)=-12n'\;\equiv\;0 \pmod 4 ,
\]
so that the congruence~\eqref{maincong} holds.

The same holds in rank $4$, with condition~\eqref{eq:rank4nec} in place of condition~\eqref{eq:rank6}.
Let $\xi$ be an oriented real vector bundle of rank $4$ over $M_{\ell,m}$
satisfying equation~\eqref{eq:rank4nec}. The second Pontryagin class of such a bundle being
the square of its Euler class, $\xi$ satisfies equation \eqref{eq:rank6}. Pontryagin classes
being stable, the bundle $\xi\oplus\varepsilon_{\R}^{2}$ has rank $6$ and the same associated
integers $a_{1},\dots,a_{k}$ and $b$ as $\xi$, and therefore
satisfies equation~\eqref{eq:rank6} as well; hence by the preceding paragraph it
satisfies the congruence~\eqref{maincong}, which is a condition on $a_{1},\dots,a_{k}$ and $b$
alone, and so holds for $\xi$ as well. As a condition on arbitrary integer tuples, however, equation~\eqref{eq:rank4nec} no more
implies the congruence~\eqref{maincong} than equation~\eqref{eq:rank6} does. The tuple considered at the beginning of this remark satisfies equation~\eqref{eq:rank4nec}
when the Euler class is taken to be $-2x_{1}$, while congruence~\eqref{maincong} fails for it;
since congruence~\eqref{maincong} holds for every bundle of rank $4$
satisfying equation~\eqref{eq:rank4nec}, no such bundle over $\HP^{2}$ realises the tuple.
\end{remark}
\section{Almost complex structures on \texorpdfstring{$M_{\ell,m}$}{M\_\{l,m\}}}\label{sec:acs}

We now use Theorem~\ref{thm:bundleacs} to determine which of the manifolds
$M_{\ell,m}$ admit an almost complex structure, applying it to the tangent
bundle. Throughout this section an almost complex structure on $M_{\ell,m}$ is
understood to induce the given orientation. As $TM_{\ell,m}$ has rank $8$, the
relevant case is~\textup{(iii)}, and the Euler class of the tangent bundle is
$e(TM_{\ell,m})=\chi(M_{\ell,m})\,u$.

By Lemma~\ref{lem:cohomology} the cohomology $H^{*}(M_{\ell,m};\Z)$ is
torsion-free and concentrated in degrees $0$, $4$ and $8$, of ranks $1$, $k$ and
$1$ respectively; all odd Betti numbers therefore vanish, and
\begin{equation}\label{eq:chi}
\chi(M_{\ell,m})=1+k+1=\ell+m+2 .
\end{equation}
Corollary~\ref{thm:acsclassification} now follows by substituting the invariants of
$TM_{\ell,m}$ into Theorem~\ref{thm:bundleacs}\,({iii}).
\begin{proof}[\textbf{Proof of Corollary~\ref{thm:acsclassification}}]
By Theorem~\ref{thm:bundleacs}\,({iii}), $M_{\ell,m}$ admits an almost
complex structure if and only if $\xi=TM_{\ell,m}$ satisfies both
\eqref{maincong} and equation~\eqref{eq:euler}. By Theorem~\ref{prop:p1ConnectedSum} the
tuple attached to $\xi$ in equation~\eqref{eq:invariants} is
$(a_{1},\dots,a_{k};b)=(1,\dots,1;7(\ell-m))$, while $e=\ell+m+2$ by
equation~\eqref{eq:chi}.

Since $\sum_{i=1}^{k}\epsilon_{i}a_{i}^{2}=\sum_{i=1}^{k}\epsilon_{i}=\ell-m$,
condition \eqref{eq:euler} reads
\[
7(\ell-m)-(\ell-m)=2(\ell+m+2),
\]
that is, $\ell=2m+1$. The congruence \eqref{maincong}, on the other hand, holds
for $TM_{\ell,m}$ precisely when $\ell-m$ is even, by
Corollary~\ref{cor:stablyAC}.

The two conditions hold simultaneously if and only if $\ell=2m+1$ and $m+1$ is
even, that is, if and only if $\ell=2m+1$ and $m$ is odd. This completes the
proof.
\end{proof}
\begin{remark}\label{rem:Yang2012acs}
As in Remark~\ref{rem:Yang2012stable}, the criterion of
Yang~\cite[Theorem~2, case~1]{Yang2012} applies to $M_{\ell,m}$ with $n=4$. According to it,
$M_{\ell,m}$ admits an almost complex structure if and only if it admits a stable
almost complex structure and
\[
4\bigl\langle p_{2}(M_{\ell,m}),[M_{\ell,m}]\bigr\rangle
-\bigl\langle p_{1}(M_{\ell,m})^{2},[M_{\ell,m}]\bigr\rangle=8(\ell+m+2).
\]
By
Theorem~\ref{prop:p1ConnectedSum} and Lemma~\ref{lem:cohomology} the left-hand side
equals $24(\ell-m)$, so that the condition above reads $\ell=2m+1$. Together with Corollary~\ref{cor:stablyAC}, this recovers
Corollary~\ref{thm:acsclassification}.
\end{remark}
\begin{remark}\label{rem:SatoSuzuki}
Sato and Suzuki \cite[Theorem~B]{SatoSuzuki} assert that the connected sums
$\ell\,\HP^{n}\mathbin{\#}m\,\overline{\HP^{n}}$ admit no almost complex
structure for $n=1,2,4,5,\dots,10$. For $n=2$ this family is precisely
$M_{\ell,m}$, and the assertion is not correct, since by
Corollary~\ref{thm:acsclassification} with $j=0$ the manifold
$3\,\HP^{2}\mathbin{\#}\overline{\HP^{2}}$ does admit an almost complex
structure.

As a necessary condition for the existence of an almost complex structure on
these connected sums, they derive
\begin{equation}\label{eq:SS}
\ell\bigl\{s_{n}-(n-1)\bigr\}=m\bigl\{s_{n}+(n-1)\bigr\}+2 ,
\end{equation}
where $s_{n}$ is the coefficient of $x^{n}$ in $(1-x)^{n+1}(1-4x)^{-1/2}$; it is
stated there in terms of $s_{n}/(n+1)$. For $n=2$ one computes $s_{2}=3$, so that the relation~\eqref{eq:SS} becomes $2\ell=4m+2$, that is, $\ell=2m+1$, which is
precisely the first of the two conditions of
Corollary~\ref{thm:acsclassification}. The nonexistence assertion therefore does
not follow from the relation~\eqref{eq:SS}. For $n=2$ it is asserted in \cite{SatoSuzuki} by
an appeal to Heaps \cite{Heaps}, no argument being given. The criterion of
\cite[Theorem~1]{Heaps} for closed oriented $8$-manifolds returns, when applied
to $M_{\ell,m}$, the two conditions of
Corollary~\ref{thm:acsclassification}, as we now verify.

As $H^{2}(M_{\ell,m};\Z)$ and $H^{6}(M_{\ell,m};\Z)$ vanish, the two cohomology
classes occurring in~\cite[Theorem~1]{Heaps} are zero, and its conditions reduce to
$w_{8}(M_{\ell,m})=0$ together with
$8\chi(M_{\ell,m})=\bigl\langle 4p_{2}(M_{\ell,m})-p_{1}(M_{\ell,m})^{2},[M_{\ell,m}]\bigr\rangle$.
By Theorem~\ref{prop:p1ConnectedSum} and equation~\eqref{eq:chi} the second reads
$8(\ell+m+2)=24(\ell-m)$, that is, $\ell=2m+1$, the condition already obtained
from the relation~\eqref{eq:SS}. Since $w_{8}(M_{\ell,m})$ is the mod~$2$ reduction of
$e(TM_{\ell,m})=\chi(M_{\ell,m})\,u$, the first says that
$\chi(M_{\ell,m})=\ell+m+2$ is even, and hence, once $\ell=2m+1$ is assumed, that
$m$ is odd. By Corollary~\ref{cor:stablyAC} it holds exactly when $TM_{\ell,m}$
is stably almost complex, and it is precisely this condition that the relation~\eqref{eq:SS}
does not detect.
\end{remark}
\begin{remark}\label{rem:orientation}
Corollary~\ref{thm:acsclassification} concerns almost complex structures inducing the given orientation. Reversing the orientation replaces $M_{\ell,m}$ by $M_{m,\ell}$, so the underlying smooth manifold admits an almost complex structure if and only if either $\ell=2m+1$ with $m$ odd, or $m=2\ell+1$ with $\ell$ odd. Since these two conditions are mutually exclusive, no such manifold $M_{\ell,m}$ is almost complex with respect to both of its orientations.
\end{remark}
\begin{acknow}
The author is sincerely grateful to Ramesh Kasilingam for his
valuable suggestions, comments, and probing questions, which have
improved the quality of this article.
The author also
gratefully acknowledges financial support in the form of Prime Minister’s Research Fellowship, Government of India (PMRF/2502403).
\end{acknow}
\bibliographystyle{alpha} 
\bibliography{reference}
\end{document}